\documentclass[10pt, a4paper, reqno]{amsart}

\usepackage[left=1.1in,right=1.1in,top=1.1in,bottom=1.1in]{geometry} 
\usepackage[utf8]{inputenc}
\usepackage{enumitem}
\usepackage{amsthm}
\usepackage{amssymb}
\usepackage{amsmath}
\usepackage{amscd}
\usepackage{mathtools}
\usepackage{graphicx}
\usepackage{float}
\usepackage{cite}
\usepackage{xcolor}
\usepackage{listings}
\usepackage[most]{tcolorbox}
\tcbuselibrary{listings}
\usepackage{tikz-cd}
\usepackage{tikz,tikz-cd}
\lstdefinestyle{lean}{
  basicstyle=\ttfamily\small,
  keywordstyle=\bfseries,
  morekeywords={import,example,by,have,rw,exact},
  columns=[l]fixed,
  keepspaces=true,
  showstringspaces=false,
  breaklines=true,
  breakatwhitespace=true,
  frame=none,
  tabsize=2,
  escapeinside={(*@}{@*)},
  literate={
    {ℕ}{{\ensuremath{\mathbb{N}}}}1
    {ℝ}{{\ensuremath{\mathbb{R}}}}1
    {∃}{{\ensuremath{\exists}}}1
    {∀}{{\ensuremath{\forall}}}1
    {≤}{{\ensuremath{\leq}}}1
    {≥}{{\ensuremath{\geq}}}1
    {∧}{{\ensuremath{\wedge}}}1
    {∨}{{\ensuremath{\vee}}}1
    {∞}{{\ensuremath{\infty}}}1
    {→}{{\ensuremath{\to}}}1
    {←}{{\ensuremath{\leftarrow}}}1
    {↔}{{\ensuremath{\leftrightarrow}}}1
    {‖}{{\ensuremath{\Vert}}}1
    {⁻¹}{{\ensuremath{{}^{-1}}}}2
    {∑}{{\ensuremath{\textstyle\sum}}}1
    {∑'}{{\ensuremath{\textstyle\sum\nolimits^{\prime}}}}2
    {∈}{{\ensuremath{\in}}}1
    {∉}{{\ensuremath{\notin}}}1
    {⊆}{{\ensuremath{\subseteq}}}1
    {⊂}{{\ensuremath{\subset}}}1
    {⊤}{{\ensuremath{\top}}}1
    {⊥}{{\ensuremath{\bot}}}1
    {⬝ᵥ}{{\ensuremath{\cdot}}}1
    {▸}{{\ensuremath{\triangleright}}}1
  },
}
\usepackage[british]{babel}
\usepackage{esint}
\usepackage{svg}
\usepackage{times}
\usepackage{comment}
\usepackage{hyperref}

\numberwithin{equation}{section}

\renewcommand{\geq}{\geqslant}
\renewcommand{\leq}{\leqslant}

\newcommand{\R}{{\mathbb{R}}}
\newcommand{\Z}{{\mathbb{Z}}}

\newcommand{\SCI}{\operatorname{SCI}}

\def\R{\mathbb{R}}

\def\Z{\mathbb{Z}}

\newtheorem{theorem}{Theorem}[section]

\newtheorem{corollary}[theorem]{Corollary}

\theoremstyle{definition}
\newtheorem{definition}[theorem]{Definition}
\newtheorem{remark}[theorem]{Remark}
\newtheorem*{remark*}{Remark}

\newtheorem{example}[theorem]{Example}

\theoremstyle{definition}
\newtheorem*{notation}{Notation}

\usepackage{xparse}
\newcounter{constant}
\renewcommand{\theconstant}{\arabic{constant}}

\newtcblisting{lstlistinglean}{
  listing only,
  listing engine=listings,
  listing options={
    style=lean,
    basicstyle=\ttfamily\fontsize{7.34pt}{8.8pt}\selectfont,
  },
  colback=black!6,
  boxrule=0pt,
  arc=0pt,
  left=5pt, right=5pt,
  top=2pt, bottom=2pt,
  width=\linewidth,
  enhanced,
}
{}
\newtcblisting{lstlistingleansmall}{
  listing only,
  listing engine=listings,
  listing options={
    style=lean,
    basicstyle=\ttfamily\fontsize{6.8pt}{7.2pt}\selectfont,
  },
  colback=black!6,
  boxrule=0pt,
  arc=0pt,
  left=5pt, right=5pt,
  top=2pt, bottom=2pt,
  width=\linewidth,
  enhanced,
}

\NewDocumentCommand{\declareconstant}{m o}{
	\refstepcounter{constant}
	\label{const:#1}
	\IfValueTF{#2}
	{\expandafter\gdef\csname constdep@#1\endcsname{#2}}
	{\expandafter\gdef\csname constdep@#1\endcsname{}}
	C^{(\theconstant)}%
	\IfValueT{#2}{_{#2}}
}

\NewDocumentCommand{\constant}{m o}{
	\IfValueTF{#2}
	{
		C^{(\ref{const:#1})}_{#2}
	}
	{
		C^{(\ref{const:#1})}%
		\expandafter\ifx\csname constdep@#1\endcsname\empty
		\else
		_{\csname constdep@#1\endcsname}
		\fi
	}
}
\title[Why Lean verification does not guarantee faithful AI autoformalisation]{Navier--Stokes lost in translation \\ -- \\
Why Lean verification of AI autoformalisation does not guarantee correct natural language proofs}

\author{A. Bastounis} 
\address{Department of Mathematics, King's College London}
\email{alexander.bastounis@kcl.ac.uk}

\author{F. Circelli} 
\address{Department of Applied Mathematics and Theoretical Physics, University of Cambridge}
\email{fac43@cam.ac.uk}

\author{A. C. Hansen*} 
\address{Department of Applied Mathematics and Theoretical Physics, University of Cambridge}
\email{a.hansen@damtp.cam.ac.uk}
\thanks{*Corresponding author: a.hansen@damtp.cam.ac.uk}

\begin{document}

\keywords{AI Autoformalisation, Navier-Stokes equations, Solvability Complexity Index hierarchy, boundaries of AI, Hilbert's 10th problem}
	\subjclass[2020]{35Q30, 03Dxx (primary) and 68V20, 68Txx, 03B65 (secondary)}

\maketitle

\vspace{-4mm}

\begin{abstract}
Autoformalisation is increasingly used to verify mathematical texts, including those generated by AI, as in OpenAI's announced proof of blow-up of solutions to the Navier-Stokes equations. In this process, an AI system translates the text from a natural language (NL) into a formal language such as Lean. Once this translation is done, the argument expressed in the formal language can easily be mechanically verified. The purpose of this article is to demonstrate why this process may offer no confidence in the original NL argument, owing to the various difficulties in performing the translation semantically faithfully. In particular, we highlight that the problem of resolving ambiguities in mathematical NL text, which is necessary in order to provide semantically faithful translation, is arbitrarily high up in the Solvability Complexity Index (SCI) hierarchy/arithmetical hierarchy (the SCI $= \infty$). Hence, informally, providing semantically faithful AI autoformalisation is harder than any computational problem including the Halting problem (which has SCI $= 1$).  To demonstrate the effect of this result  we provide several examples of AI mistranslations of NL statements and proofs into Lean in practice, resulting in mismatches between NL proofs and their Lean `verifications'. These include OpenAI's announced Navier-Stokes proof. In particular, we show that the formalised Lean proof does not correspond to the NL proof of blow-up of solutions to the Navier-Stokes equations. 
\end{abstract}

\section{Introduction: Two types of Autoformalisation}\label{sec:Intro}

We start by describing the two main types of autoformalisation from a natural language (NL) to a formal language:

\begin{enumerate}[label=(\roman*),leftmargin=16pt]
\item \label{item:TranslationNoSemantic}
\textbf{Translating an alleged proof in an NL to obtain a formal Lean proof.}
Suppose a theorem (the statement, not including a proof) has already been faithfully and correctly translated into Lean. Suppose also the theorem is accompanied by an alleged NL proof.   
The goal is to let a machine (which we will call AI) translate the NL proof into Lean to obtain a formal proof of the theorem. Success is declared when the AI has produced a `translation' which Lean accepts as a valid proof (meaning it compiles), with no use of Lean's `\texttt{sorry}' keyword or extra axioms being permitted. 
\end{enumerate}

\begin{enumerate}[label=(\roman*),leftmargin=16pt,resume]
\item \label{item:TranslationSemantic}
\textbf{Faithful semantic translation of a mathematical text.}
Suppose we are given a mathematical text (e.g. a book or a paper) written in an NL. The goal is to let an AI translate every definition, theorem, proposition, proof etc. in the text to Lean code that compiles, see e.g. the example in \S \ref{sec:meta}. The definitions, statements and proofs must themselves be translated, semantically preserving the
source's mathematical content. For a book, this obligation applies throughout and includes the dependencies between concepts and results.
If the stated goal is to validate its proofs, correspondence to the written
arguments must also be established. In particular, a different, provable assertion would not constitute a semantically faithful translation. This autoformalisation problem is much harder than problem (i) above (see \S \ref{sec:SemanticPreservationHard}).
\end{enumerate}
{\bf Warning:} Meeting the success criterion of \ref{item:TranslationNoSemantic} does not mean that the original NL proof is correct. Indeed, a full Lean compilation of the `translation' does not guarantee a semantically faithful translation. The AI could very easily change the semantics of the translation in order to satisfy this success criterion. For example, the NL proof could be wrong while the `translated' proof is correct (see Example \ref{ex:example1}). Another possibility is that both the NL proof and the formal proof are correct, but completely different (see Example \ref{ex:example2}). A third possibility is that the NL proof provides stronger statements than what the formal proof actually establishes, with (of course) different proofs. The latter happens in OpenAI's announced proof of blow-up of Navier-Stokes equations. In all cases the correctness of the formal proof does not say anything about the correctness of the NL proof.

To answer the question: `Is the NL argument correct?' one needs to consider the second form of autoformalisation \ref{item:TranslationSemantic}. This is, however, in general an extremely difficult problem, as we discuss in \S\ref{sec:SemanticPreservationHard}. In fact, it is strictly harder than the Halting problem.

Because of the phenomenon of mistranslations -- as highlighted in this paper -- the NL proof by OpenAI and other autoformalised Lean proofs should not prima facie be trusted without the same peer review process and scrutiny that other proofs are subjected to. The Lean code merely tells us that the theorem is correct, yet the NL proof with intermediate arguments, lemmas and results may be incorrect or have incorrect arguments. 

{\bf Disclaimer:} We do not make claims about the correctness of OpenAI's NL proof, we only make statements about mistranslations into Lean.

\begin{figure}
\centering
\includegraphics[width=0.7\linewidth]{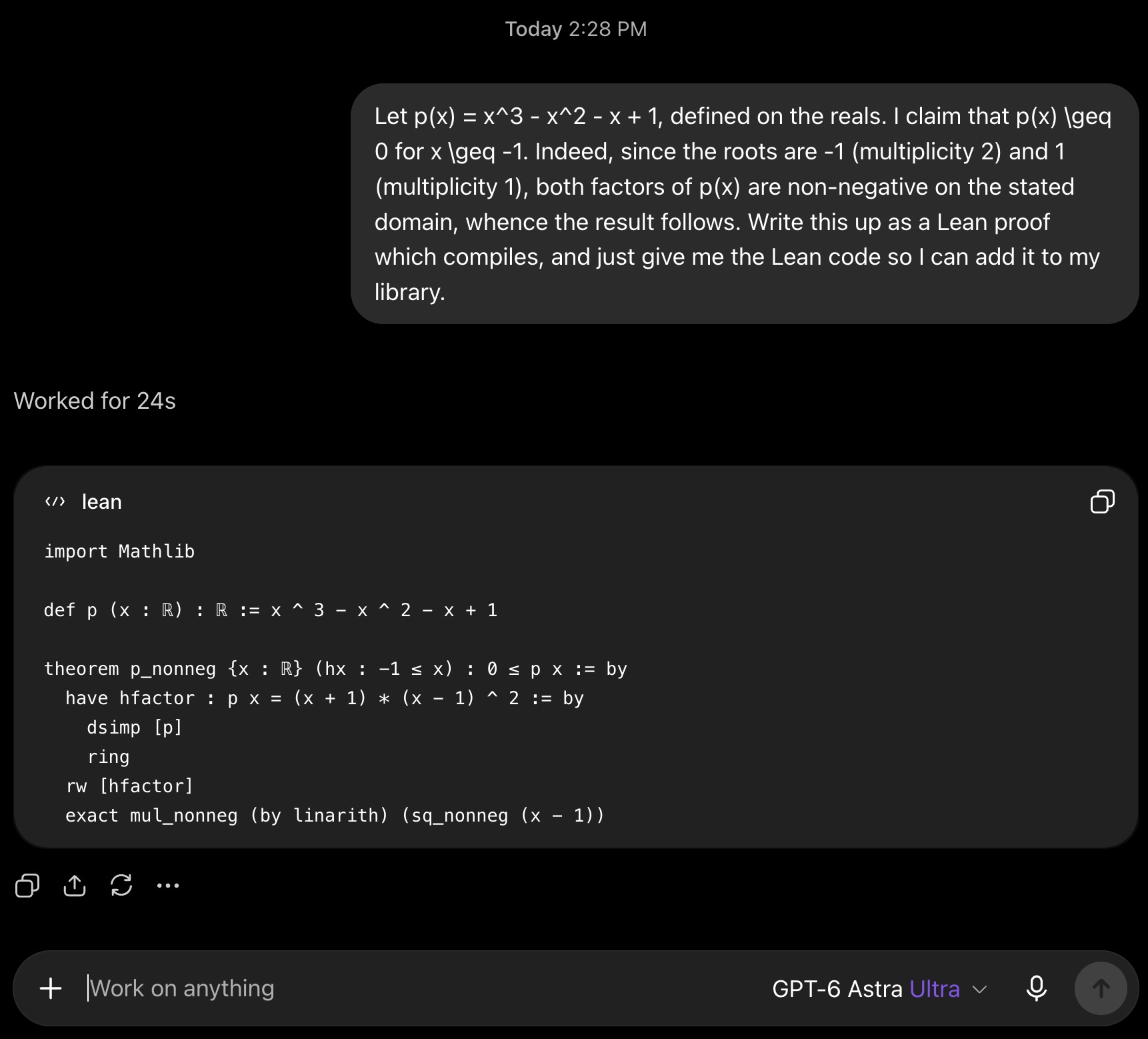}
\caption{ChatGPT-6 (Astra Ultra) `translates' the wrong NL `proof' in Example \ref{ex:example1} into a correct proof in Lean that correctly compiles.}
\label{fig:cubic-chatbot}
\end{figure}

\section{When an AI autoformalised  `translation' is not semantically faithful}\label{sec:first}

In order to illustrate the issues with the first type of autoformalisation described in \S \ref{sec:Intro} we consider several examples of mistranslation. We begin with some elementary examples to highlight the issue, and then turn to more technical examples of mistranslation in OpenAI's announced proof of blow-up of Navier-Stokes equations in \S \ref{sec:OpenAI_NS}.

\begin{example}[A correct statement with an incorrect NL proof, and a `translation' to a different, correct Lean proof]\label{ex:example1} Consider the following theorem:

\begin{center}{\it 
Let \(p(x)=x^3-x^2-x+1\), where $x \in \mathbb{R}$. Then \(p(x)\geq0\) for \(x\geq-1\).
}\end{center}

\begin{proof}[Incorrect NL proof]
The roots are \(-1\), of multiplicity two, and \(1\), of
multiplicity one. Hence\footnote{Note that the `hence' is also dubious without mentioning the leading coefficient of $p$.},
\[
  p(x)=(x-1)(x+1)^2\geq0,
\]
since both factors are nonnegative on the stated domain.
\end{proof}

\noindent The above proof is incorrect; the multiplicities of the stated roots are wrong and hence the claimed factorisation of $p(x)$ is wrong too. 
In the exchange shown in Figure~\ref{fig:cubic-chatbot}, a user presented
the incorrect NL argument above as their `proof' and asked the chatbot to
write this argument as a Lean proof, providing a Lean code that compiles. The chatbot returned a correct Lean proof by finding the correct multiplicities of the roots. 
This example demonstrates how the first form of autoformalisation above can result in a wrong NL proof being mistranslated into a correct Lean proof.
\end{example}

\begin{figure}
\centering
\includegraphics[width=0.7\linewidth]{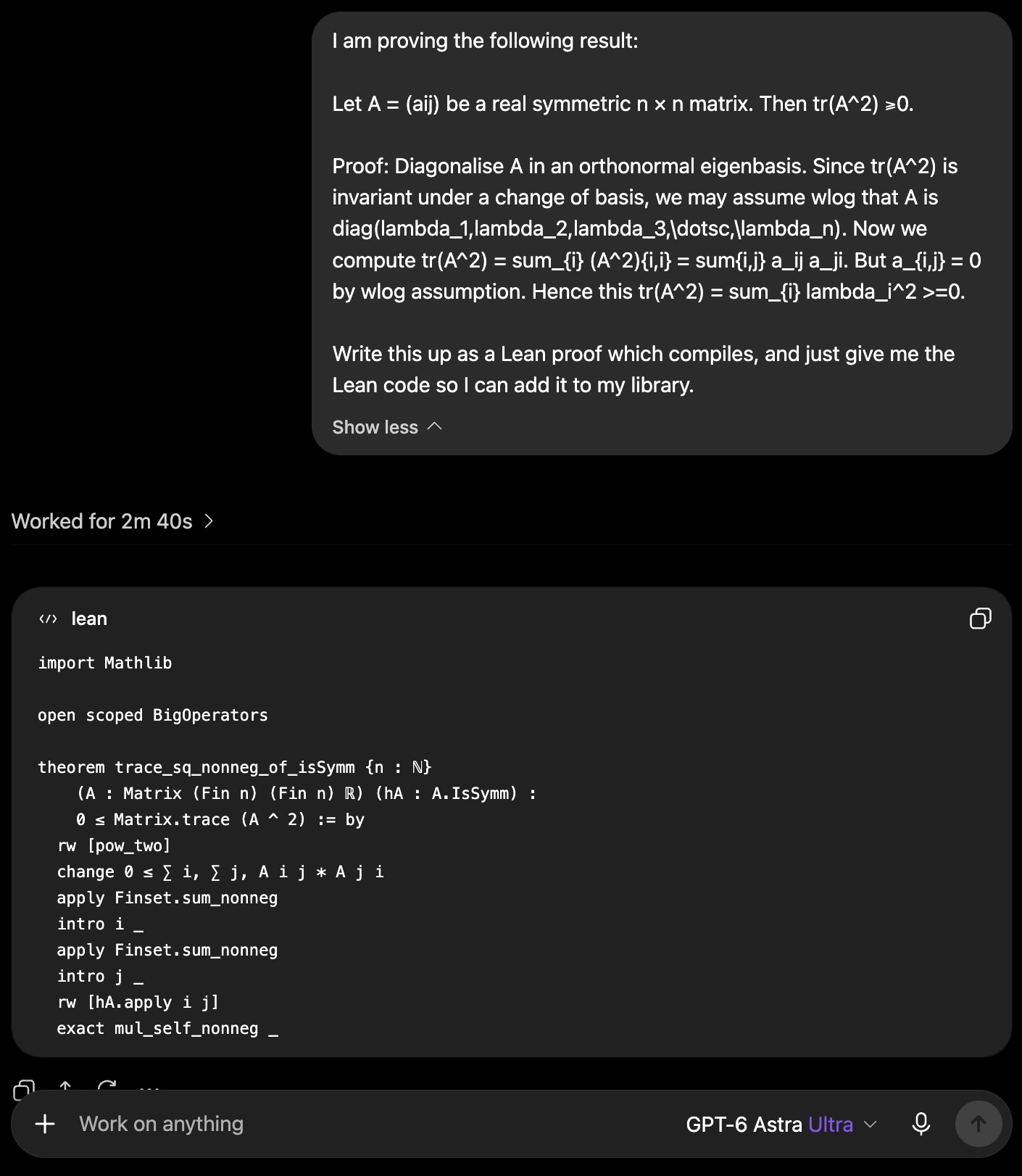}
\caption{ChatGPT-6 (Astra Ultra) `translates' the correct NL proof in Example \ref{ex:example2} into a completely different proof in Lean that compiles.}
\label{fig:matrix-chatbot}
\end{figure}

\begin{example}[A correct statement with a correct NL proof, which is `translated' into a Lean proof with a different mathematical argument ]\label{ex:example2}
Consider the following theorem:

\vspace{2mm}
\begin{center}{\it 
Let \(A = (a_{ij})\) be a real symmetric \(n\times n\) matrix. Then \(\operatorname{tr}(A^2)\geq0\).
}
\end{center}

\begin{proof}[Correct NL proof]
Diagonalise $A$ in an orthonormal eigenbasis. Since
$\operatorname{tr}(A^2)$ is invariant under a change of basis, we may
assume $A=\operatorname{diag}(\lambda_1,\ldots,\lambda_n)$. Now compute
\begin{equation}\label{eq:trace-expansion}
    \operatorname{tr}(A^2)
    =\sum_i(A^2)_{ii}
    =\sum_{i,j}a_{ij}a_{ji}.
\end{equation}
As $a_{ij}=0$ for $i\ne j$ and $a_{ii}=\lambda_i$, this reduces to
$\sum_i\lambda_i^2\geq0$.
\end{proof}

In the exchange shown in Figure~\ref{fig:matrix-chatbot}, a user presented
the correct NL argument above as their proof and asked the chatbot to
write this argument as a Lean proof which compiles. The chatbot, however, returned a different proof displayed in Figure~\ref{fig:matrix-chatbot}. The code gives a correct Lean proof, which has a distinctly different argument compared to the original NL proof that the chatbot was asked to write up in Lean.  Indeed, the Lean proof starts directly at \eqref{eq:trace-expansion}, keeping the original matrix $A$: it then uses $a_{ji}=a_{ij}$ to conclude that
\[
    \sum_{i,j}a_{ij}a_{ji}=\sum_{i,j}a_{ij}^2\geq0.
\]
The unfaithful translation of the argument simply omits the change of basis and then proceeds in the same manner as the NL proof (using symmetry rather than diagonality to obtain a sum of squares). Both proofs are correct, yet this example demonstrates how the first form of autoformalisation above can mistranslate a correct NL proof to yield a completely different valid Lean proof.
\end{example}

\section{Mistranslation in practice -- OpenAI's announced Navier-Stokes proof}\label{sec:OpenAI_NS}
The three-dimensional incompressible Navier-Stokes regularity problem has a long history \cite{CaffarelliKohnNirenberg1982, CordobaMartinezZoroaZheng2026Hypodissipative, Tao2016Averaged, Serrin1962, NecasRuzickaSverak1996, FujitaKato1964, Seregin2012Blowup, Hou2023NavierStokes, CordobaMartinezZoroaZheng2026Hypodissipative} (we can only cite a small subset of papers here), dating back to Leray's fundamental paper from 1934 \cite{Leray1934}, and was designated a Clay Millennium Prize Problem in 2000 \cite{Fefferman2006}. In the GitHub repository accompanying its recently announced proof of finite-time blow-up of solutions to the Navier-Stokes equations, OpenAI writes:
\begin{quote}
	\emph{
		`This repository contains Lean 4 formalizations of the results presented in ``Finite time blowup for Navier--Stokes'' [their announced NL proof] and ``Finite time blowup for the Euler equation'' by OpenAI.'}
\end{quote}
It is natural to ask whether this formalisation in Lean represents a semantically faithful translation of the NL paper, so that we can conclude that the NL paper represents a correct proof? 
Interestingly, OpenAI's announced proof becomes a great source for examples of mistranslations, exactly as pointed out in \S \ref{sec:first}. In particular, the Lean formalisation does not represent a semantically faithful translation of the NL paper, meaning that the Lean formalisation provides no guarantee that the NL proof is correct. We provide several examples below (see also Remark \ref{rem:further}).

\begin{figure}
\centering
\includegraphics[width=0.9\linewidth]{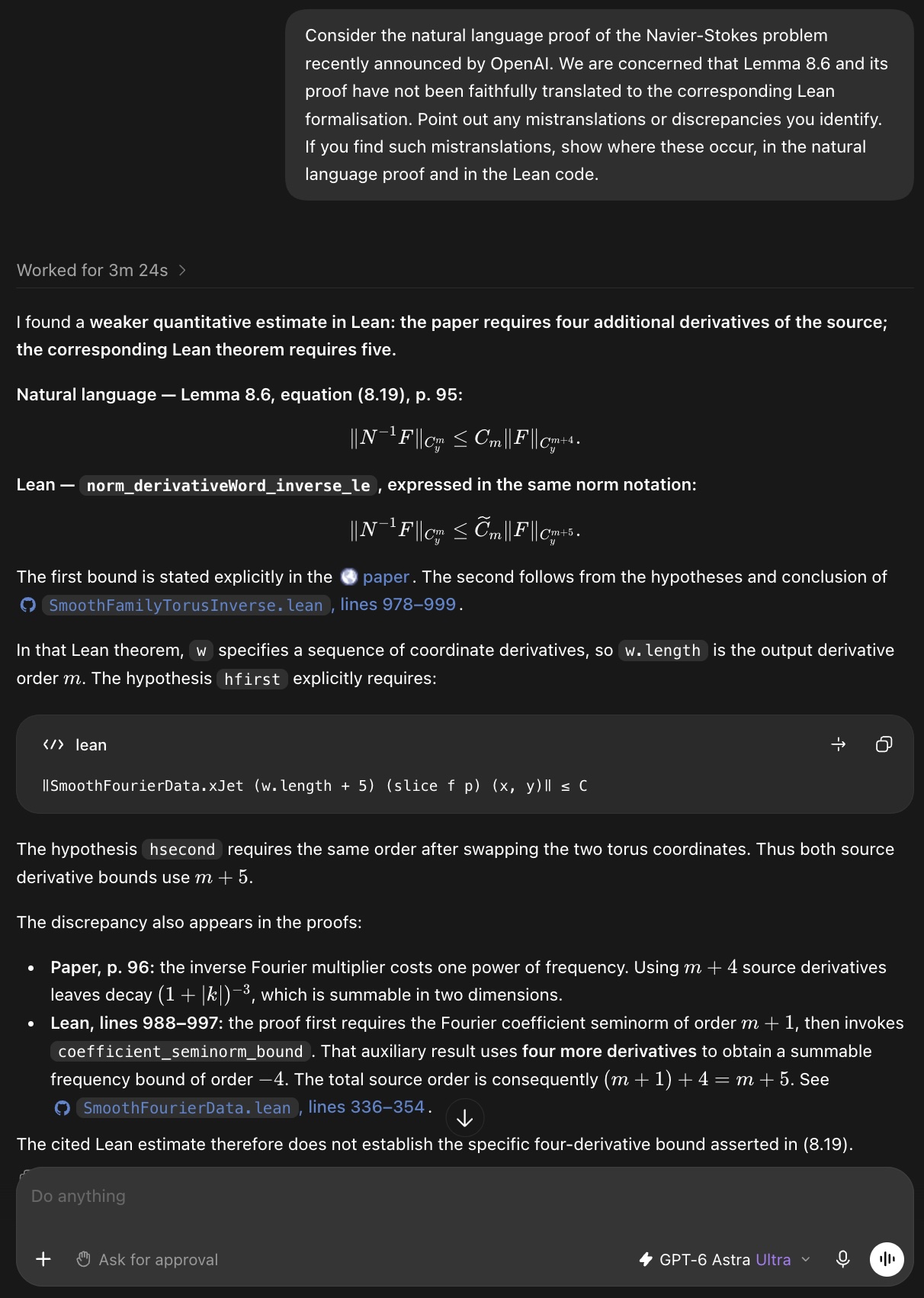}
\caption{ChatGPT-6 (Astra Ultra) shares our concern that the NL argument in Lemma 8.6 in the announced proof of blow-up of Navier--Stokes has not been faithfully translated into Lean. In particular, there is a discrepancy between Lemma~8.6 and the cited Lean estimate: the latter uses one additional input derivative to control the same torus derivatives of the output. See Example~\ref{ex:Navier_strong} for details.
}
\label{fig:Navier1}
\end{figure}

\subsection{When the NL paper declares stronger statements than what Lean proves} We commence with an example that complements those in \S \ref{sec:first}. In the following example the AI autoformalisation results in a different Lean proof of a weaker statement. 

\begin{example}[The NL paper claims a stronger inverse estimate than the cited Lean formalisation]\label{ex:Navier_strong}
Figure~\ref{fig:Navier1} concerns the estimate in
\href{https://cdn.openai.com/pdf/32d9f210-8b73-45e0-91bc-82a30aef8a9a/navier-stokes.pdf\#page=95}{Lemma~8.6, equation~(8.19)}, of OpenAI's \emph{Finite time blowup for Navier--Stokes}.
We compare this estimate with the cited Lean declarations at
\href{https://github.com/openai/NavierStokesAndEuler/tree/f9e8bc5b38b6e212696e8a30e3e91517af887bbd}{commit \texttt{f9e8bc5}} (click to see the documentation on Github).
The issue is a change in the order of the input derivatives required to bound the output derivatives. In particular, we have the following discrepancy. With $m$ a nonnegative integer, the comparison in Figure~\ref{fig:Navier1} is
\begin{align}
 \text{NL paper, equation (8.19):}\qquad
 &\|N^{-1}F\|_{C_y^m}\leq C_m\|F\|_{C_y^{m+4}}, \label{eq:Navier-inverse-comparison}
\\
 \text{Lean Estimate:}\qquad
 &\|N^{-1}F\|_{C_y^m}\leq K_m\|F\|_{C_y^{m+5}}. \label{eq:Navier-inverse-comparison-Lean}
\end{align}
\emph{Summary: The Lean code produced by OpenAI often has stronger conditions on bounds on the derivatives of various functions than the corresponding results in the NL paper}.

Above, $F : P \times \mathbb{T}^2 \to \mathbb{C}$ is a function of parameters in $P$ and a torus variable $y \in \mathbb{T}^2$, with $\mathbb{T}^2:= \mathbb{R}^2/\mathbb{Z}^2$. The non-negative constants $C_m$, $K_m$ are independent of $F$ but allowed to depend on $m$ and the fixed direction $v_t$ (which defines an operator $N = v_t \cdot \nabla_y$). Note the above norms only involve torus derivatives. See \S\ref{sec:NS1_notation} for further explanation of the notation.
\end{example}

The proof of Lemma~8.6 opens with the following passage (source paper,
\href{https://cdn.openai.com/pdf/32d9f210-8b73-45e0-91bc-82a30aef8a9a/navier-stokes.pdf\#page=96}{page~96}):
\begin{quote}
``The divisor bound (6.7) gives $|v_t\cdot k|^{-1}\leq C(1+|k|)$.
Four more derivatives of $F$ than the requested output leave a summable
$(1+|k|)^{-3}$ bound on the Fourier series in two dimensions.
This proves (8.19), smoothness, and uniqueness after fixing the zero Fourier
coefficient. The multiplier commutes with every slow coefficient derivative
and preserves reality.''
\end{quote}
The related Lean codes -- on the other hand -- all use five additional derivatives, not four. A key example is the following Lean theorem, which appears to bear the closest resemblance to (8.19):
\begin{lstlistinglean}
omit [FiniteDimensional ℝ P] in
theorem norm_derivativeWord_inverse_le (d : Direction) {f : Source P}
    (hf : ContDiff ℝ ∞ f) (hp : Periodic f) (w : List Bool) (p : P) (Y : Plane) {C : ℝ}
    (hzero : ∀ x ∈ Icc (0 : ℝ) 1, ∀ y ∈ Icc (0 : ℝ) 1, ‖f (p, (x, y))‖ ≤ C)
    (hfirst : ∀ x ∈ Icc (0 : ℝ) 1, ∀ y ∈ Icc (0 : ℝ) 1,
      ‖SmoothFourierData.xJet (w.length (*@\textcolor{red}{+ 5}@*)) (slice f p) (x, y)‖ ≤ C)
    (hsecond : ∀ x ∈ Icc (0 : ℝ) 1, ∀ y ∈ Icc (0 : ℝ) 1,
      ‖SmoothFourierData.xJet (w.length (*@\textcolor{red}{+ 5}@*))
        (SmoothFourierData.swapFunction (slice f p)) (x, y)‖ ≤ C) :
    ‖derivativeWord w (slice (inverse d f) p) Y‖ ≤
      ParametricTorusInverse.mixedLossConstant w.length * C := by
  have hbound := SmoothFourierData.coefficient_seminorm_bound (slice_smooth hf p) (hp p)
    (w.length + 1) hzero (by simpa only [Nat.add_assoc] using hfirst)
    (by simpa only [Nat.add_assoc] using hsecond)
  calc
    _ ≤ ((6 * ‖omega⁻¹‖) * ‖omega‖ ^ w.length) *
        coeffSeminorm (w.length + 1) (coefficient f p) :=
      inverse_derivativeWord_bound d
        (SmoothFourierData.rapid_coefficient (slice_smooth hf p) (hp p)) w Y
    _ ≤ ((6 * ‖omega⁻¹‖) * ‖omega‖ ^ w.length) *
        ((3 ^ ((w.length + 1) + 4) * C) * ∑' k : Frequency, (weight k ^ (*@\textcolor{red}{4}@*))⁻¹) :=
      mul_le_mul_of_nonneg_left hbound (by positivity)
    _ = _ := by simp only [ParametricTorusInverse.mixedLossConstant, Nat.add_assoc]; ring
\end{lstlistinglean}

\noindent{\footnotesize\linespread{1}\selectfont\raggedright\textbf{Source (GitHub permalinks):} \texttt{openai/NavierStokesAndEuler}, commit
\href{https://github.com/openai/NavierStokesAndEuler/tree/f9e8bc5b38b6e212696e8a30e3e91517af887bbd}{\texttt{f9e8bc5}}.\\
File: \texttt{NavierStokes/SmoothFamilyTorusInverse.lean}.\\
Declaration:
\href{https://github.com/openai/NavierStokesAndEuler/blob/f9e8bc5b38b6e212696e8a30e3e91517af887bbd/NavierStokes/SmoothFamilyTorusInverse.lean\#L1059-L1080}{\texttt{norm\_derivativeWord\_inverse\_le}, lines~1059--1080}.\\
Red has been added to highlight aspects relevant to this example. The displayed Lean code is otherwise unchanged. \par}

The most important aspects of the above Lean theorem \href{https://github.com/openai/NavierStokesAndEuler/blob/f9e8bc5b38b6e212696e8a30e3e91517af887bbd/NavierStokes/SmoothFamilyTorusInverse.lean#L1059-L1080}{\texttt{norm\_derivativeWord\_inverse\_le}} are the two hypotheses:  
\begin{lstlistinglean}
    (hfirst : ∀ x ∈ Icc (0 : ℝ) 1, ∀ y ∈ Icc (0 : ℝ) 1,
      ‖SmoothFourierData.xJet (w.length (*@\textcolor{red}{+ 5}@*)) (slice f p) (x, y)‖ ≤ C)
    (hsecond : ∀ x ∈ Icc (0 : ℝ) 1, ∀ y ∈ Icc (0 : ℝ) 1,
      ‖SmoothFourierData.xJet (w.length (*@\textcolor{red}{+ 5}@*))
        (SmoothFourierData.swapFunction (slice f p)) (x, y)‖ ≤ C) 
\end{lstlistinglean} 
\noindent which \footnote{The parameter set $P$ are the spatial variables which are not relevant to this part of the proof.} should be interpreted as
\begin{equation*}
	\left|\frac{\partial^{m+5} F(y_1,y_2)}{\partial y_{1}^{m+5}}\right| \leq C, \quad \left|\frac{\partial^{m+5} F(y_1,y_2)}{\partial y_{2}^{m+5}}\right| \leq C,
\end{equation*}
for every $y=(y_1,y_2) \in \mathbb{T}^2$, and the conclusion
\begin{lstlistinglean}
    ‖derivativeWord w (slice (inverse d f) p) Y‖ ≤
      ParametricTorusInverse.mixedLossConstant w.length * C
\end{lstlistinglean}
\noindent which\footnote{Strictly speaking, $w$ in OpenAI's Lean code is defined to be an object in the derivativeWord class: a list of booleans for which each `false' entry corresponds to a derivative in the $y_1$ coordinate and each `true' entry a derivative in the $y_2$ coordinate.} should be interpreted as 
\begin{equation*}
	\left|\frac{\partial^{|w|} N^{-1} F(y_1,y_2)}{\partial w}\right| \leq K_{|w|} \cdot C,
\end{equation*}
with $C$ the same constant as in the hypotheses, and where $w$ is a multi-index. Technically, to obtain an estimate directly comparable with equation (8.19) in OpenAI's NL paper, one would need also to take a supremum over all such all multi-indices $w$ with $|w| \leq m$ \footnote{This is not explicitly done as part of \texttt{norm\_derivativeWord\_inverse\_le}.}, thereby obtaining 
\begin{equation*}
	\left\|N^{-1} F\right\|_{C^m_y} \leq \hat K_m\left\|F\right\|_{C^{m+5}_y},
\end{equation*}
with $\hat K_m = \max(K_0,K_1,\dotsc,K_m)$. This estimate is then strictly weaker than equation (8.19) due to the extra derivative on the right-hand side.

\begin{remark}[Consistent mismatch between $m+4$ and $m+5$ derivatives]
Similar estimates as~\eqref{eq:Navier-inverse-comparison-Lean} can be seen throughout OpenAI's Lean code: for example, 
\[
\href{https://github.com/openai/NavierStokesAndEuler/blob/f9e8bc5b38b6e212696e8a30e3e91517af887bbd/NavierStokes/SmoothFamilyTorusInverse.lean#L865-L875}{\texttt{inverse\_finiteJets}} \, \text{ and } \, \href{https://github.com/openai/NavierStokesAndEuler/blob/f9e8bc5b38b6e212696e8a30e3e91517af887bbd/NavierStokes/ParametricTorusInverse.lean#L654-L680}{\texttt{mixedJet\_inverse\_bound}}
\] 
each contain a similar $m+5$ derivative requirement, rather than the expected $m+4$ derivatives from the NL paper, as in~\eqref{eq:Navier-inverse-comparison}.
\end{remark}
 
The reason for the above discrepancy is related to the difference in proof method employed by the NL paper and the Lean code. As the quote above indicates, the NL paper makes use of the convergent series $\sum_{k \in \Z^2} \frac{1}{\left(1+|k|\right)^3}$ by using $m+4$ derivatives. In contrast, the Lean code displayed above for 
\begin{equation}\label{eq:Lean1}
 \href{https://github.com/openai/NavierStokesAndEuler/blob/f9e8bc5b38b6e212696e8a30e3e91517af887bbd/NavierStokes/SmoothFamilyTorusInverse.lean#L1059-L1080}{\texttt{norm\_derivativeWord\_inverse\_le}}
\end{equation}
employs the series $\sum_{k \in \Z^2} \frac{1}{(1+|k_1|+|k_2|)^4}$ (see the exponent `4' highlighted in red in the code \eqref{eq:Lean1} above) and thus uses $m+5$ derivatives instead. Tracing the proof of \eqref{eq:Lean1} we find that the series arises from applying the Lean theorem \href{https://github.com/openai/NavierStokesAndEuler/blob/f9e8bc5b38b6e212696e8a30e3e91517af887bbd/NavierStokes/SmoothFourierData.lean#L368-L387}{\texttt{coefficient\_seminorm\_bound}}, 
just as Figure~\ref{fig:Navier1} mentions. Consequently, the Lean results discussed in this section are weaker \footnote{In fact, some such Lean results are actually incomparable with~\eqref{eq:Navier-inverse-comparison}, due to the mismatch in the type of derivative. For example, the Lean theorem \texttt{inverse\_finiteJets} uses mixed derivatives, not just torus derivatives. Note however that these results are still using $m+5$ derivatives.} than~\eqref{eq:Navier-inverse-comparison} in the NL proof.

\subsubsection{Explanation of the notation and technical information} \label{sec:NS1_notation}
The variable $y=(y_1,y_2)$ belongs to a two-dimensional torus
$\mathbb{T}^2=\mathbb{R}^2/\mathbb{Z}^2$. The fixed vector and differential operator in Lemma~8.6 are
\[
 v_t=(\sqrt{2}-1,1),\qquad
 N=v_t\cdot\partial_y=(\sqrt{2}-1)\partial_{y_1}+\partial_{y_2}.
\]
The subscript $t$ is specified in
\href{https://cdn.openai.com/pdf/32d9f210-8b73-45e0-91bc-82a30aef8a9a/navier-stokes.pdf\#page=63}{equation~(6.2)}
of the source paper. Here $F:\mathbb{T}^2\to\mathbb{C}$ is smooth, with zero normalized Haar mean: $\int_{[0,1]^2}F(y)\,\mathrm{d}y=0$. The notation $N^{-1}F$ denotes the unique smooth zero-mean
solution $\varphi$ of $N\varphi=F$. The zero-mean requirement assures this solution is uniquely specified (else any solution, via addition of a constant, yields another). For smooth $G : \mathbb{T}^2 \to \mathbb{C}$ and a nonnegative integer $r$, the following norm is defined:
\[
 \|G\|_{C_y^r}
 :=\max_{\substack{a,b\geq0\\a+b\leq r}}
       \sup_{y\in\mathbb{T}^2}
       |\partial_{y_1}^{a}\partial_{y_2}^{b}G(y)|,
 \qquad a,b \in \mathbb{N}.
\]
The subscript $y$ means that other variables are held fixed. This is relevant in the case $G$ is of the form $G(y) = F(p, y)$ for some $F : P \times \mathbb{T}^2 \to \mathbb{C}$ and some fixed $p$.

\subsection{When a statement and its proof in the NL paper are both mistranslated into Lean} We now give an example that goes further than Example \ref{ex:example2} in \S \ref{sec:first}. In the following example the AI autoformalisation results in a Lean proof of a different bound obtained by a substantively different mathematical argument.

\begin{figure}
\centering
\includegraphics[width=0.9\linewidth]{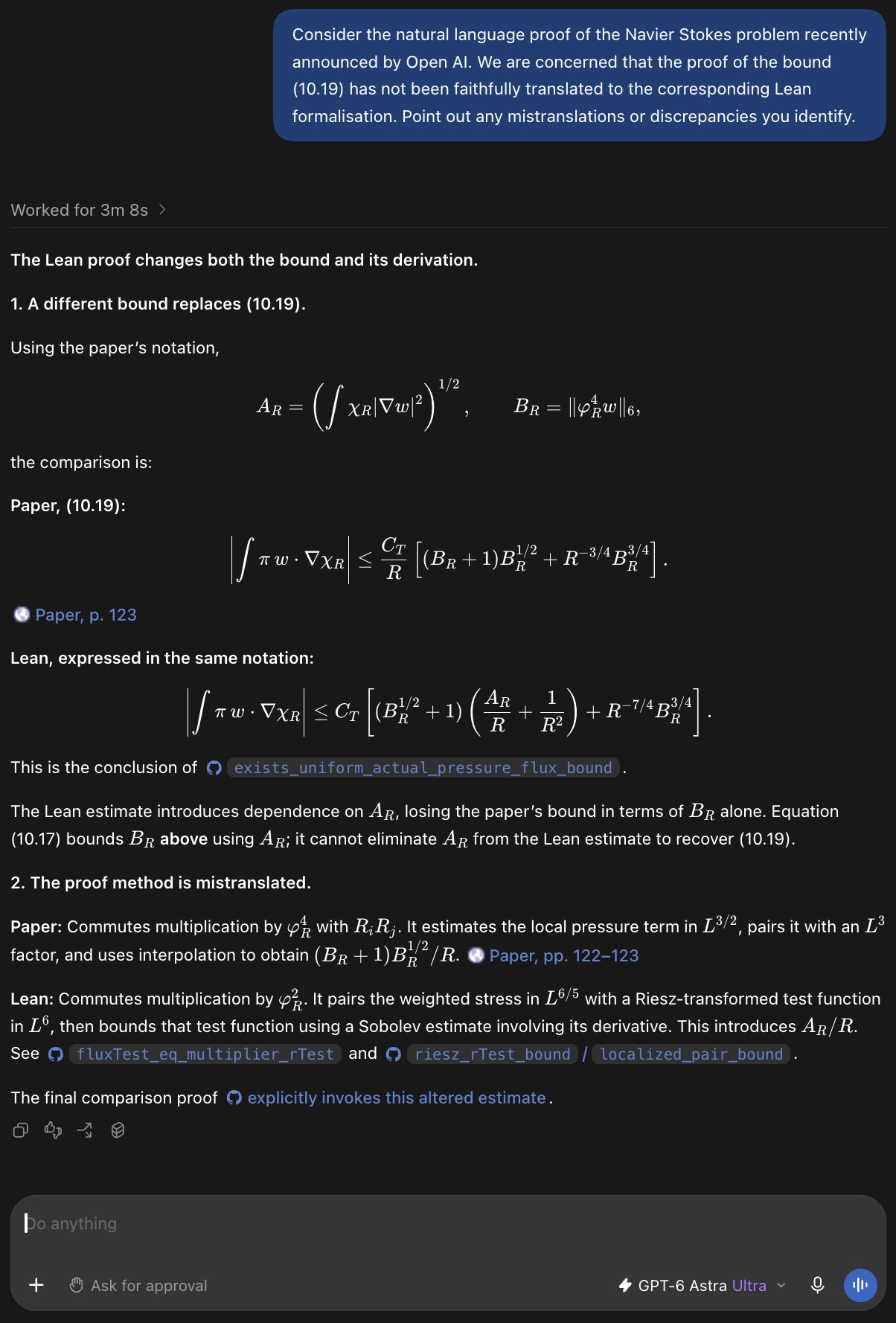}\caption{ChatGPT-6 (Astra Ultra) shares our concern that the proof of the pressure-flux bound~(10.19) in the announced proof of blow-up of Navier--Stokes is not semantically faithfully translated into Lean. Both the stated bound and the mathematical argument used to prove it differ from the cited Lean estimate. Example~\ref{ex:Navier_weak} explains this in further detail, including the notation.
}
\label{fig:Navier2}
\end{figure}

\begin{example}[The NL paper's pressure-flux bound and its proof are both mistranslated]\label{ex:Navier_weak}
Figure~\ref{fig:Navier2} concerns the estimate in
\href{https://cdn.openai.com/pdf/32d9f210-8b73-45e0-91bc-82a30aef8a9a/navier-stokes.pdf\#page=123}{equation~(10.19)}
of OpenAI's \emph{Finite time blowup for Navier--Stokes}.
We compare this estimate, together with its proof, with the cited Lean declaration at
\href{https://github.com/openai/NavierStokesAndEuler/tree/f9e8bc5b38b6e212696e8a30e3e91517af887bbd}{commit \texttt{f9e8bc5}}.
The issue here is not only a change in the bound, but also a change in the mathematical argument used to establish it. In particular, we have the following discrepancy: with $R\geq1$, the comparison in Figure~\ref{fig:Navier2} is
\begin{align} 
 \text{NL, equation (10.19):}\,
 &\Bigl|\int \pi\, w\cdot\nabla\chi_R\Bigr|
 \leq \frac{C_T}{R}\Bigl[(B_R+1)B_R^{1/2}+R^{-3/4}B_R^{3/4}\Bigr], \text{ for a.e. } t \in (0, T),
 \label{eq:Navier-flux-comparisonNL}\\[2mm]
 \text{Lean estimate:}\,
 &\Bigl|\int \pi\, w\cdot\nabla\chi_R\Bigr|
 \leq C_T\Bigl[(B_R^{1/2}+1)\Bigl(\frac{A_R}{R}+\frac{1}{R^2}\Bigr)+R^{-7/4}B_R^{3/4}\Bigr], \forall t \in (0,T) \label{eq:Navier-flux-comparisonLean},
\end{align}
where both the left hand side and the right hand side of both \eqref{eq:Navier-flux-comparisonNL} and \eqref{eq:Navier-flux-comparisonLean} depend on $t$; see below for explanation of the notation. 
The NL paper's bound is stated purely in terms of $B_R$, whereas the Lean estimate additionally depends on $A_R$. Here $C_T$ is a finite nonnegative constant, allowed to depend on $T$ (which itself is the maximum timescale for which the bounds are considered) and the norm conventions, but not on $R$. Although \href{https://cdn.openai.com/pdf/32d9f210-8b73-45e0-91bc-82a30aef8a9a/navier-stokes.pdf\#page=122}{equation~(10.17)} in the NL paper bounds $B_R$ above using $A_R$, it cannot be used to eliminate $A_R$ from the Lean estimate to recover~(10.19); the two bounds in \eqref{eq:Navier-flux-comparisonNL} and \eqref{eq:Navier-flux-comparisonLean} are therefore hard to compare. \footnote{In fact, the purpose of the lemma is to establish that $w = 0$ and hence the (equal to $0$) bound on the right hand side of \eqref{eq:Navier-flux-comparisonNL} is stricter than the (non-zero) bound on the right hand side of \eqref{eq:Navier-flux-comparisonLean}. However, this is only established after completing the entire proof of Lemma 10.5.}

The closest Lean code to (10.19) (equation \eqref{eq:Navier-flux-comparisonNL} in our paper) is the following result:
\begin{lstlistingleansmall}
theorem exists_uniform_actual_pressure_flux_bound {T : ℝ} {u v : VelocityField}
    {p q : PressureField} (H : PressureRecovery.Hypotheses T u v p q)
    (M(*@\ensuremath{{}_0}@*) U(*@\ensuremath{{}_0}@*) G(*@\ensuremath{{}_0}@*) : ℝ (hM(*@\ensuremath{{}_0}@*) : 0 ≤ M(*@\ensuremath{{}_0}@*)) (hU(*@\ensuremath{{}_0}@*) : 0 ≤ U(*@\ensuremath{{}_0}@*)) (hG(*@\ensuremath{{}_0}@*) : 0 ≤ G(*@\ensuremath{{}_0}@*))
    (hM : ∀ t ∈ Icc 0 T, MemLp (fun x => (u - v) (t, x)) 2 volume ∧
      comparisonLpNorm 2 (fun x => (u - v) (t, x)) ≤ M(*@\ensuremath{{}_0}@*))
    (hU : ∀ t ∈ Icc 0 T, MemLp (fun x => u (t, x)) 3 volume ∧
      comparisonLpNorm 3 (fun x => u (t, x)) ≤ U(*@\ensuremath{{}_0}@*))
    (hG : ∀ t ∈ Icc 0 T, ∀ i j : Fin 3,
      Integrable (tensorDiff u v t i j) volume ∧ comparisonLpNorm 1 (tensorDiff u v t i j) ≤ G(*@\ensuremath{{}_0}@*)) :
    ∃ CP : ℝ, 0 ≤ CP ∧ ∀ R : ℝ, 1 ≤ R → ∀ t ∈ Ioo 0 T,
      |(*@\ensuremath{\int}@*) x, (p - q) (t, x) * fderiv ℝ (ComparisonCutoffs.weight R) x ((u - v) (t, x))| ≤ CP *
        ((cutoffL6 (ComparisonCutoffs.cutoff R) (u - v) t ^ (1 / 2 : ℝ) + 1) *
          (dissipationRoot (ComparisonCutoffs.cutoff R) (u - v) t / R + 1 / R ^ 2) +
          R ^ (-(7 / 4 : ℝ)) * cutoffL6 (ComparisonCutoffs.cutoff R) (u - v) t ^ (3 / 4 : ℝ)) := by
obtain (*@\ensuremath{\langle}@*)CP, hCP, hbound(*@\ensuremath{\rangle}@*) := exists_uniform_canonicalCutoffFlux_bound M(*@\ensuremath{{}_0}@*) U(*@\ensuremath{{}_0}@*) G(*@\ensuremath{{}_0}@*) hM(*@\ensuremath{{}_0}@*) hU(*@\ensuremath{{}_0}@*) hG(*@\ensuremath{{}_0}@*)
refine (*@\ensuremath{\langle}@*)CP, hCP, ?_(*@\ensuremath{\rangle}@*)
intro R hR t ht
have ht' : t ∈ Icc 0 T := Ioo_subset_Icc_self ht
have hu := NavierStokes.PeriodicUniqueness.spatial_smooth H.smooth_u ht'
have hv := NavierStokes.PeriodicUniqueness.spatial_smooth H.smooth_v ht'
exact(actual_flux_integrable_and_le_canonicalNorm H ht (zero_lt_one.trans_le hR) hu hv).2.trans
  (hbound R hR u v t hu hv (hM t ht').1 (hU t ht').1 (fun i j => (hG t ht' i j).1)
    (hM t ht').2 (hU t ht').2 (fun i j => (hG t ht' i j).2))
\end{lstlistingleansmall}
\noindent{\footnotesize\linespread{1}\selectfont\raggedright
	\textbf{Source (GitHub permalinks):} \texttt{openai/NavierStokesAndEuler}, commit
	\href{https://github.com/openai/NavierStokesAndEuler/tree/f9e8bc5b38b6e212696e8a30e3e91517af887bbd}{\texttt{f9e8bc5}}.\\
	File: \texttt{NavierStokes/R3/PressureFlux.lean}.\\
	Declaration:
	\href{https://github.com/openai/NavierStokesAndEuler/blob/f9e8bc5b38b6e212696e8a30e3e91517af887bbd/NavierStokes/R3/PressureFlux.lean\#L576}{\texttt{exists\_uniform\_actual\_pressure\_flux\_bound}, line~576}.\\\par}

\par\smallskip
	
Note that similar bounds (which are nonetheless distinct from \eqref{eq:Navier-flux-comparisonLean}) appear in 
\[
\text{\href{https://github.com/openai/NavierStokesAndEuler/blob/f9e8bc5b38b6e212696e8a30e3e91517af887bbd/NavierStokes/R3/PressureFlux.lean\#L546-L572}{\texttt{exists\_uniform\_canonicalCutoffFlux\_bound}}} 
\]
and  \href{https://github.com/openai/NavierStokesAndEuler/blob/f9e8bc5b38b6e212696e8a30e3e91517af887bbd/NavierStokes/R3/PressureFlux.lean\#L516-L543}{\texttt{canonicalCutoffFlux\_bound}}\footnote{In addition, there is a similar argument that appears in the file \texttt{NavierStokes/R3StressPressureEstimate.lean} as part of the definition of \href{https://github.com/openai/NavierStokesAndEuler/blob/f9e8bc5b38b6e212696e8a30e3e91517af887bbd/NavierStokes/R3StressPressureEstimate.lean\#L38}{\texttt{regularized\_flux\_Bound}}. Once again the bound established there is different from the one in \eqref{eq:Navier-flux-comparisonNL}, and, intriguingly, does not appear to be used to prove the Navier Stokes theorem proposed by OpenAI. It is instead proven as one of the steps in the Lean compilation phase, but it is not directly used.}. The bound \eqref{eq:Navier-flux-comparisonLean} is given in the following Lean code, taken from that of 	\href{https://github.com/openai/NavierStokesAndEuler/blob/f9e8bc5b38b6e212696e8a30e3e91517af887bbd/NavierStokes/R3/PressureFlux.lean#L585-L589}{\texttt{exists\_uniform\_actual\_pressure\_flux\_bound}, Line~576}.

\begin{lstlistingleansmall}
      |(*@\ensuremath{\int}@*) x, (p - q) (t, x) * fderiv ℝ (ComparisonCutoffs.weight R) x ((u - v) (t, x))| ≤ CP *
        ((cutoffL6 (ComparisonCutoffs.cutoff R) (u - v) t ^ (1 / 2 : ℝ) + 1) *
          (dissipationRoot (ComparisonCutoffs.cutoff R) (u - v) t / R + 1 / R ^ 2) +
          R ^ (-(7 / 4 : ℝ)) * cutoffL6 (ComparisonCutoffs.cutoff R) (u - v) t ^ (3 / 4 : ℝ))
\end{lstlistingleansmall}
The discrepancy in \eqref{eq:Navier-flux-comparisonLean} above arises through the diverging proof methods employed by the NL paper and the Lean code. It is beyond the scope, and besides the point, of this article to go into the full technical details of the proof technique for the NL paper's estimate (10.19) (in this paper, \eqref{eq:Navier-flux-comparisonNL}). More information is provided below, but in summary three of the main reasons for the differences between the results and strategy used by the NL paper and the Lean argument are as follows:
\begin{enumerate}
	\item The Lean proof does not rely on the boundedness of the Riesz transform as a map from $L^{3/2} \to L^{3/2}$; it instead invokes Sobolev embedding to move to $L^2$ and uses the boundedness of the Riesz transform in $L^2$. The additional derivative involved in the Sobolev embedding is why \eqref{eq:Navier-flux-comparisonLean} features $A_R$.
	\item The Lean proof applies H\"older's inequality with different exponents, which changes the bounds at the end.
	\item In the Lean proof, the left hand integral in \eqref{eq:Navier-flux-comparisonNL} is explicitly shown to be continuous in time. This ensures that the statement for almost every $t \in (0,T)$ can be upgraded to a statement that avoids the `almost every' qualifier.
\end{enumerate}

\end{example}

\subsubsection{Explanation of the notation and technical information}

For context, inequality (10.19) appears itself in the proof of a velocity uniqueness result, namely Lemma 10.5. 
In the proof of Lemma 10.5, using the language of 
the NL paper, one chooses $0 \leq \varphi \leq 1 $ to be smooth,
compactly supported and equal to one on the unit ball. For $R \geq 1$, they write $\varphi_R(x) = \varphi(x/R)$ and
$\chi_R = \varphi^8_R$. Continuing, the NL paper states:  `\emph{Let $R_i$ be the Riesz transform with Fourier multiplier operator $i\xi_i/\|\xi\|_2$.}' 
The NL proof of Lemma 10.5 begins by assuming that $(v,P)$ and $(u,p)$ are two solutions -- with $v$ (respectively $u$) the velocity and $P$ (respectively $p$) the pressure -- to a constructed Navier Stokes problem. The NL proof defines $w := v - u$ and $\pi := P-p$ and proceeds to establish the vanishing of $w$\footnote{The Lean proof actually uses $w = u-v$ and $\pi = p -P$, with the sign difference carried through in further definitions.}; the estimate (10.19) is a stepping stone towards this result. Note that $v,u,w$ all depend on a time parameter $t$ as well as spatial parameters; the time parameter is suppressed throughout the remaining discussion. The proof also defines a distribution $\pi_* = \sum_{i, j} R_i R_j g_{ij}$ where $R_i, R_j$ are Riesz transforms, and where $g = (g_{ij})_{i,j \in \{1,2,3\}}$ with $g_{ij} = v_iv_j - u_iu_j = w_iw_j + w_i u_j + u_i w_j$. 
As indicated
in Figure~\ref{fig:Navier2}, using the NL paper's notation,
\[
A_R:=\Bigl(\int\chi_R|\nabla w|^2\Bigr)^{1/2},
\qquad
B_R:=\|\varphi_R^{4}w\|_{L^6},
\]
so that $A_R$ is a localized $L^2$ gradient norm and $B_R$ a localized
$L^6$ velocity norm.

The purpose of defining $\pi_*$ is that it is an attempt to get an explicit approximation for $\pi$ in terms of the velocity fields; hence instead of directly analysing the integrand in (10.19), one replaces $\pi$ in this term by $\pi_*$\footnote{A large part of the NL proof of Lemma 10.5 addresses this issue; ultimately, it suffices to show that $\nabla \pi = \nabla \pi_*$ in the sense of distributions.}. The integrand is then factored into a product of two terms: $\varphi_{R}^4 \pi_*$ and another test function containing $\varphi_{R}^{3}$. The Lean proof differs here, in that it splits the integral into $\varphi_{R}^2 \pi_*$ and another test function containing $\varphi_R^5$; we shall see the consequences of this change later.

The NL proof now tries to bound $\varphi_R^4 \pi_*$. The strategy here is to move $\varphi_R^4$ into the argument of the Riesz transformations in the definition of $\pi_*$; doing so ensures that these Riesz transforms are being applied to functions with compact support in $L^p$ spaces. In doing so one must pay a penalty, since $a R_i b \neq R_i a b$ in general for functions $a,b$. Hence (10.18) in the NL paper establishes that
\begin{equation} \label{eq:NLSplitting}
	\varphi_R^4 \pi_* = \sum_{i, j} R_i R_j (\varphi_R^4 g_{ij}) + \sum_{i,j} [\varphi_R^4,R_i R_j] g_{ij}\end{equation}
where $[\cdot,\cdot]$ is the standard commutator.

There are a number of differences in the approach the Lean argument uses here. As mentioned before, the proof in Lean takes out a factor of $\varphi_R^2$ instead of $\varphi_R^4$. Mathematically, the Lean argument is the following calculation \footnote{Lean takes the real parts of many terms in this bound, but this is not especially important for our exposition.}
\begin{align} 
	\int \pi w \cdot \nabla \chi_R &= \sum_{i, j \in \{1,2,3\}} \int g_{ij} R_i R_j (w \cdot \nabla \varphi_R^8) =   \sum_{i, j \in \{1,2,3\}} \int g_{ij} R_i R_j (w \cdot 8\varphi^2_R\varphi^5_R \nabla \varphi_R) \notag\\&=   8\sum_{i, j} \int \varphi_R^2 g_{ij} R_i R_j (w \cdot \varphi_R^5 \nabla \varphi_R)  + 8 \sum_{i, j} \int g_{ij} [R_i R_j,\varphi_R^2] (w \cdot \varphi_R^5 \nabla \varphi_R)  \label{eq:LeanSplitting}
\end{align}
where the first equality is the Lean code \href{https://github.com/openai/NavierStokesAndEuler/blob/main/NavierStokes/R3/PressureFlux.lean#L211-L219}{\texttt{actual\_flux\_eq\_canonicalCutoffFlux}}, the second equality is contained in the Lean code \href{https://github.com/openai/NavierStokesAndEuler/blob/f9e8bc5b38b6e212696e8a30e3e91517af887bbd/NavierStokes/R3/PressureFluxTest.lean#L26-L34}{\texttt{multiplier\_r\_eq\_weight\_deriv}}, as well as the code in \href{https://github.com/openai/NavierStokesAndEuler/blob/f9e8bc5b38b6e212696e8a30e3e91517af887bbd/NavierStokes/R3/PressureFluxTest.lean#L284-L291}{\texttt{cutoffTest} and \texttt{cutoffTest\_flux\_identity}} and the third equality uses the Lean codes given by the Lean theorem  \href{https://github.com/openai/NavierStokesAndEuler/blob/f9e8bc5b38b6e212696e8a30e3e91517af887bbd/NavierStokes/R3/PressureFlux.lean#L251-L267}{\texttt{pressurePair\_decomposition}} and  \href{https://github.com/openai/NavierStokesAndEuler/blob/f9e8bc5b38b6e212696e8a30e3e91517af887bbd/NavierStokes/R3/PressureFlux.lean#L372-L377}{\texttt{norm\_cutoff\_pressurePair\_le\_local\_commutator}}.

Both the NL proof and Lean proof then attempt to bound the first term in \eqref{eq:NLSplitting} and \eqref{eq:LeanSplitting} respectively\footnote{We do not comment here on how the second sum is treated by Lean and the NL, since differences in the bound and approach can easily be seen by observing the first sum.}. Here, there are substantial differences. The NL proof attempts to obtain an $L^{3/2}$ bound on the first term. It uses H\"older's inequality to show that $\varphi_R^4 g_{ij} \in L^{3/2}$ and the $L^{3/2} \to L^{3/2}$ boundedness of the Riesz transforms (which is established by citing \cite{SteinSingular}) to get a bound of the form $\|R_i R_j (\varphi^4_R g_{ij})\|_{3/2} \leq C \|\varphi^4_R g_{ij}\|_{3/2} \leq C' (\|\varphi^4_Rw\|_6+1)\|w\|_2 \leq C'' (B_R + 1)$, where $C,C',C''$ are all (potentially different) constants that can depend on $T$.

By contrast, the Lean proof does not rely on the above citation\cite{SteinSingular}. Instead, it begins by using  H\"older's inequality (with H\"older conjugates $6$ and $6/5$) to obtain 
\[\left|\int \varphi_R^2 g_{ij} R_i R_j (w \cdot \varphi_R^5 \nabla \varphi_R)\right|\leq \|\varphi_R^2 g_{ij}\|_{6/5} \|R_i R_j (w \cdot \varphi_R^5 \nabla \varphi_R)\|_6.\]

We will first look at how the Lean argument bounds the right hand term $\|R_i R_j (w \cdot \varphi_R^5 \nabla \varphi_R)\|_6$ (as in the lean code \href{https://github.com/openai/NavierStokesAndEuler/blob/f9e8bc5b38b6e212696e8a30e3e91517af887bbd/NavierStokes/R3/RieszTestOperators.lean#L223-L239}{\texttt{lpNorm\_six\_rieszTest\_le}} and \href{https://github.com/openai/NavierStokesAndEuler/blob/f9e8bc5b38b6e212696e8a30e3e91517af887bbd/NavierStokes/R3/PressureFlux.lean#L289-L305}{\texttt{riesz\_rTest\_bound}}). First, a Sobolev embedding theorem (bounding the $L^6$ norm by a multiple of the $L^2$ norm of the derivative, because this is an argument done over the spatial variables which are in three dimensions) is used, as in the file \href{https://github.com/openai/NavierStokesAndEuler/blob/f9e8bc5b38b6e212696e8a30e3e91517af887bbd/NavierStokes/R3/SmoothSobolevL6.lean#L140-L150}{\texttt{smooth\_eLpNorm\_six\_toReal\_le}}. Next, an interchange of derivatives with the Riesz transform (which comes from the Fourier multipliers) as stated in both \href{https://github.com/openai/NavierStokesAndEuler/blob/f9e8bc5b38b6e212696e8a30e3e91517af887bbd/NavierStokes/R3/RieszTestOperators.lean#L89-L100}{\texttt{pderiv\_rieszTest}} and \href{https://github.com/openai/NavierStokesAndEuler/blob/f9e8bc5b38b6e212696e8a30e3e91517af887bbd/NavierStokes/R3/RieszTestOperators.lean#L102-L104}{\texttt{partial\_rieszTest}} is employed to move the gradient operator inside the transform. Finally, the argument uses the boundedness of the Riesz operators from Schwartz functions in $L^2$ to functions in $L^2$ (which follows easily from considering the Fourier multipliers of these operators) as in each of \href{https://github.com/openai/NavierStokesAndEuler/blob/f9e8bc5b38b6e212696e8a30e3e91517af887bbd/NavierStokes/R3/RieszL2Bounds.lean}{\texttt{integral\_norm\_sq\_rieszTest\_le}}, \href{https://github.com/openai/NavierStokesAndEuler/blob/f9e8bc5b38b6e212696e8a30e3e91517af887bbd/NavierStokes/R3/RieszTestOperators.lean#L184-L214}{\texttt{lpNorm\_two\_fderiv\_rieszTest\_le}} and \href{https://github.com/openai/NavierStokesAndEuler/blob/f9e8bc5b38b6e212696e8a30e3e91517af887bbd/NavierStokes/R3/RieszL2Bounds.lean#L165-L169}{\texttt{rieszTest\_memLp\_and\_l2\_bound}}. Symbolically, the calculation is as follows:
\begin{align*} \|R_i R_j (w \cdot \varphi_R^5 \nabla \varphi_R)\|_6 \leq C  \|\nabla R_{i}R_j (w \cdot \varphi_R^5 \nabla \varphi_R)\|_2 \leq C' \bigg\| R_{i}R_j \nabla (w \cdot \varphi_R^5 \nabla \varphi_R)\bigg\|_2\leq C'\| \nabla(w \cdot \varphi_R^5 \nabla \varphi_R)\|_2,
\end{align*}
where $C$ is a constant from the Sobolev inequality and $C'$ is another constant. In \href{https://github.com/openai/NavierStokesAndEuler/blob/f9e8bc5b38b6e212696e8a30e3e91517af887bbd/NavierStokes/R3/PressureFluxTest.lean#L139-L184}{\texttt{norm\_fderiv\_r\_le\_raw}}, the Lean proof then bounds (using the product rule), for $x \in \R^3$ (and fixing the time parameter) 
\begin{align*}
	\|\nabla(w \cdot \varphi_R^5 \nabla \varphi_R)(x)\|_{\ell_2} \leq & \,\varphi^5_R(x) \|\nabla \varphi_R(x)\|_{\ell_2} \|\nabla w(x)\|_{op} +  \varphi_R^5 \|\nabla^2 \varphi_R(x)\|_{op}\|w(x)\|_{\ell_2} \\&+ 5 \varphi_R(x)^4 \|\nabla \varphi_R(x)\|^2_{\ell_2} \|w(x)\|_{\ell_2}.
\end{align*}

Some further simplification is done (see for example \href{https://github.com/openai/NavierStokesAndEuler/blob/f9e8bc5b38b6e212696e8a30e3e91517af887bbd/NavierStokes/R3/PressureFluxTest.lean#L187-L225}{\texttt{norm\_fderiv\_r\_le\_amplitude}}), but ultimately this expression leads to the $A_R/R +1/R^2$ term present in \eqref{eq:Navier-flux-comparisonLean}.
When $\varphi_R(x)$ was defined, it was done so that $\varphi_R(x) \in [0,1]$,  $\|\nabla \varphi_R(x)\|_{\ell_2} \leq C/R$ and $\|\nabla^2 \varphi_R(x)\|_{op} \leq C_2/R^2$. The $A_R$ term comes from moving $\varphi_R^4$ inside the norm in $\|\nabla w(x)\|_{op}$. 

Finally, we note that the $B_R^{1/2} + 1$ term that appears in the Lean statement \eqref{eq:Navier-flux-comparisonLean} comes from using the definition of $g_{ij}=  w_iw_j + w_i u_j + u_i w_j$ to bound $\|\varphi_R^2 g_{ij}\|_{6/5}$. Indeed, $|\varphi_{R}^2 g_{ij}|\leq 2|u||w| +|\varphi_R w|^2$ is established pointwise\footnote{Technically, the Lean theorem \texttt{norm\_weighted\_tensorDiff\_le} establishes the result $|s^2 g_{ij}| \leq 2|u||w| + |sw|^2$ for any $0 \leq s \leq 1$, which can then be applied with $s = \varphi_{R}(x)$ for each $x$.} in \href{https://github.com/openai/NavierStokesAndEuler/blob/f9e8bc5b38b6e212696e8a30e3e91517af887bbd/NavierStokes/R3/LocalizedTensorBounds.lean#L119} {\texttt{norm\_weighted\_tensorDiff\_le}}. The Lean proof bounds $\lVert |\varphi_{R}w|^{2} \rVert_{6/5} \leq \lVert \varphi_{R} w \rVert_{12/5}^{2}$, and then an interpolation inequality \href{https://github.com/openai/NavierStokesAndEuler/blob/f9e8bc5b38b6e212696e8a30e3e91517af887bbd/NavierStokes/R3/WeightedInterpolation.lean#L150-L161}{\texttt{cutoff\_interpolation\_twelve\_fifths}} is used to get \begin{equation} \label{eq:Lean125Interpolation}\|\varphi_R w\|_{12/5} \leq \|w\|_2^{3/4}\|\varphi^4_R w\|_6^{1/4} \leq C B_R^{1/4}.\end{equation} Secondly, to bound $\| |u||w| \|_{6/5}$, a simple application of H\"older in \href{https://github.com/openai/NavierStokesAndEuler/blob/f9e8bc5b38b6e212696e8a30e3e91517af887bbd/NavierStokes/R3/LocalizedTensorBounds.lean#L46}{\texttt{cross\_norm\_bound}} gives $\||u||w| \|_{6/5} \leq \|u\|_3 \|w\|_2 \leq C$. This argument is also distinct from the NL proof, as no $12/5$ interpolation like the one present in \eqref{eq:Lean125Interpolation} is contained in the proof of Lemma 10.5.

\begin{remark}[{\bf Further potential mistranslations of the Navier-Stokes proof}]\label{rem:further}
The above examples require careful manual checking of both the NL proof as well as the Lean proof, which is delicate and highly time consuming. Moreover, the fact that we display only two examples does not mean that these are the only cases of mistranslations. Thus, a pertinent question becomes: are there more examples of mistranslations, and how can one find them in the most efficient way? For a preliminary answer to this question, the reader may consult the following  \href{http://www.damtp.cam.ac.uk/research/afha/anders/Navier-Stokes_Experiment.pdf}{\underline{experiment} (click to view)} aimed at finding other mistranslations in OpenAI's Navier-Stokes proof. 
\end{remark}

\subsection{Mistranslations and OpenAI's announced Euler proof} Alongside its Navier-Stokes announcement, OpenAI reported finite-time blow-up for the unforced incompressible Euler equations on \(\mathbb{R}^{3}\), starting from smooth, compactly supported initial data. The Euler blow-up and well posedness question has a long history, and we will -- as in the Navier-Stokes case -- only mention a small subset of results here  \cite{BealeKatoMajda1984,ConstantinFeffermanMajda1996,LuoHou2014,Elgindi2021, ChenHou2022Analysis, ChenHou2025Numerics, AlpogeBuckmaster2026Euler, DeLellisSzekelyhidi2009DifferentialInclusion, CordobaMartinezZoroa2024IPM}.

The format of the announced Euler paper is similar to the Navier-Stokes. There is a NL proof and an autoformalised Lean proof. As in the Navier-Stokes case, it is natural to ask whether this formalisation in Lean represents a semantically faithful translation of the NL paper, so that we can conclude that the NL paper represents a correct proof.  It is beyond the scope of this paper to carefully examine the autoformalisation of the Euler proof in a similar way to what we have done in this paper with the Navier-Stokes case. However, based on some examinations of the autoformalisation of the Euler case -- with both AI and manual tools -- we think that the Euler proof will be another great case study for mistranslations in practice.    

We encourage other readers to examine the autoformalisation of OpenAI's Euler NL proof and determine if there are mistranslations. Our prediction is based on preliminary examinations, but our assessment is that the likelihood of substantial mistranslations is very high.

\subsection{Why would mistranslations happen?}

When success is judged solely by obtaining a Lean proof which successfully compiles, the procedure followed by the AI can be summarised as follows.

\begin{quote}
\textbf{Input:} A faithful Lean translation $X'$ of an NL theorem $X$ and an alleged NL proof $Y$ of $X$.
\begin{tabbing}
\textbf{while} no proof of $X'$ has been verified \textbf{do}\\
\quad\= The AI revises $Y'$ using $Y$ as a guide (that is, the AI attempts a `translation' of $Y$ into $Y'$)\\
\> The AI checks if $Y'$ is a valid Lean proof of $X'$.\\
\textbf{end while}\\
\textbf{Output}: the verified proof $Y'$.
\end{tabbing}
\end{quote}
The acceptance criterion tests whether $Y'$ proves $X'$ in Lean. It does not
check whether the reasoning in $Y$ has been preserved. A revision may
repair an error or replace a correct argument by another one, just as in the above examples. Both count as success under this criterion, and neither establishes fidelity to the original purported proof. In fact, since the above procedure accepts any valid Lean proof, if the original NL proof is indeed incorrect, the AI is incentivised to find a different proof.

\begin{remark}[Why AI cannot just translate the correct Lean proof back to English/LaTeX]
It is natural to ask whether these mistranslation issues could be avoided by simply back-translating the produced Lean proof into NL and comparing with the original NL proof. It then remains, however, to establish both that the back-translation semantically faithfully represents the formal proof, and that its reasoning corresponds to the original argument. That is, back-translation requires further checks to ensure correspondence; it does not escape the objections raised in this section. In other words, we reach the same issue of semantic faithfulness not present in the outline for \ref{item:TranslationNoSemantic} but available only with autoformalisation in \ref{item:TranslationSemantic} above, just going in the reverse direction. In summary, this approach is as difficult as type \ref{item:TranslationSemantic}.
\end{remark}

\section{Logic in AI: Why semantic preservation in AI autoformalisation is hard} \label{sec:SemanticPreservationHard}
For generations, logicians, mathematicians, linguists and philosophers have vigorously debated questions concerning natural and formal languages, semantics, and semantic preservation in translation  \cite{Moschovakis2006, Hales2019CNL, IcardMoss2023, BertoHornischer2023, vanBenthem2014, Montague1970UniversalGrammar, Montague1974English, Quine1959MeaningTranslation, Quine1970Indeterminacy, Tarski1944Truth, Rabe2017Logics, Benthem2}. For standard background on formal languages, interpretations and deductive systems, see \cite[Chapters~1--2]{Enderton2001}. In autoformalisation semantic preservation becomes particularly delicate because of the need to resolve ambiguities, as we will see below. Throughout, we use $\mathbb{N}=\{0,1,2,\ldots\}$, as in Lean's \texttt{Nat}.

\subsection{Logic in autoformalisation: Resolving ambiguities is harder than the Halting problem}\label{sec:Halting} A crucial problem facing any AI autoformaliser is the task of resolving ambiguities. Note that ambiguities may not always come from the fact that NLs produce ambiguous statements. Indeed, as the following example demonstrates, ambiguities can come from the fact that well-definedness of mathematical objects can depend on computational problems. This fact indicates that the hardness of resolving such an ambiguity is induced by the hardness of solving certain computational problems.  Hence, results from foundations of mathematics \& computation -- such as the SCI hierarchy/arithmetical hierarchy -- become important in order to understand the difficulty of resolving ambiguities. 
 
\begin{example}[A hidden existence presupposition causing ambiguities]\label{ex:example3}
	An author of a textbook constructs explicitly a recursive enumeration (that is, there exists a computer program that can list them) $\{p_e\}_{e \in \mathbb{N}}$ of polynomials in $k+1$ variables with integer coefficients. Later on, they write down a particular value of $e$, and define $n_e$ to be the least natural number $n$ for which
	$p_e(n,x_1,\ldots,x_k)=0$ has no solution in natural numbers $x_1,\ldots,x_k$. They then define the real number $r_e=1/(n_e+1)$ and make the claim that it is obvious that $(r_e+1)^2 = r_e^2+2r_e+1$.
\end{example}

At first glance, this may seem like a trivial example to autoformalise. Indeed, the conclusion that $(r_e+1)^2 = r_e^2 + 2r_e+1$ is elementary to establish in Lean. The problem is the following: For Lean to prove that $(r_e+1)^2 = r_e^2 + 2r_e+1$, then $r_e$ must be well-defined as a rational number. In particular, it must be established whether or not there exists an $n$ such that $p_e(n,x_1,\ldots,x_k)=0$ has no natural-number solution $x_1, \ldots, x_k$. This issue causes an ambiguity: can Example \ref{ex:example3} be faithfully translated to Lean at all? The answer is as follows: yes, if $r_e$ is well defined, and no if $r_e$ is not well defined (when $n_e$ does not exist).  An AI autoformaliser aiming to translate Example \ref{ex:example3} preserving the semantics is faced with the problem of resolving this ambiguity. If $r_e$ is not well-defined, the AI autoformaliser should not attempt to translate in order to make Lean accept the translation. 

\begin{remark}[A semantic preserving AI autoformaliser must refrain from translating in certain cases]\label{rem:remark_refrain}
 Note that if the AI autoformaliser attempts to `translate' Example \ref{ex:example3} when $r_e$ is not well defined -- and Lean accepts the `translation' -- the `translation' cannot be semantically faithful, it must be a mistranslation. 
 This begs the following question: 
\begin{quote}
	{\bf \emph{
		Can an AI autoformaliser resolve the ambiguity in Example \ref{ex:example3} in general? That is, given $e \in \mathbb{N}$, can an AI autoformaliser determine when $r_e$ is well defined? 
	}}
\end{quote}
The answer to this problem comes from Hilbert's 10th problem \cite{Matiyasevich1993} when $k \geq 9$:
\begin{quote}
	{\bf \emph{
		No, such an AI autoformaliser can never exist, even if one could solve the Halting problem. 
	}}
\end{quote}
See Appendix \ref{appendix1} and Theorem \ref{thm:presupposition} for a detailed explanation of this statement. Example \ref{ex:example3} shows why semantic preservation in mathematics is hard. In general, an AI cannot determine when a statement is ambiguous -- and thus cannot be translated -- and when a statement is not ambiguous and can be faithfully translated. Moreover, the above result can be strengthened and informally stated as follows:
 \begin{quote}
	\emph{
		Providing semantically faithful AI autoformalisation is harder than any computational problem. 
	}
\end{quote}
See Appendix \ref{sec:ambSCI} for a precise statement.
\end{remark}

\begin{example}[A natural attempt to resolve Example \ref{ex:example3}]\label{ex:example4} It is natural to ask why one cannot simply apply the following algorithm, which may fail to halt and hence does not directly contradict the negative result concerning Example \ref{ex:example3} above:
	\begin{quote}
		\textbf{Input:} Example \ref{ex:example3} and $e \in \mathbb{N}$ (recall that $e$ uniquely defines the claim in Example \ref{ex:example3}).
		\begin{tabbing}
			\textbf{while} no translation of Example~\ref{ex:example3} has been verified by Lean \textbf{do}\\
			\quad\= The AI revises an attempted translation $Y'$ of Example~\ref{ex:example3} into Lean.\\
			\> The AI checks if $Y'$ is accepted by Lean.\\
			\textbf{end while}\\
			\textbf{Output}: the accepted translation $Y'$.
		\end{tabbing}
	\end{quote}
At face value, it seems like this methodology would only accept a translation when $n_e$ is defined, and thus all choices of $e$ for which the claim that $(r_e+1)^2 = r_e^2 + 2r_e+1$ in Example \ref{ex:example3} is a true unambiguous statement would ultimately be resolved.
\end{example}

\subsection{Why the algorithm in Example \ref{ex:example4} cannot solve the problem}\label{sec:why}	
	
	There is once again a problem with this approach. The AI could easily produce an attempted translation $Y'$ which uses a Lean definition that supplies a default value, when $n_e$ does not exist, in its proof that $(r_e+1)^2 = r_e^2 + 2r_e+1$. In particular, for a fixed $e \in \mathbb{N}$, write for $n \in \mathbb{N}$ the predicate $B_e(n)$ as follows:
	\begin{equation}\label{eq:pred}
	B_e(n)\;\Longleftrightarrow\;
	\forall \,(x_1, \ldots, x_k) \in\mathbb N^k,\quad p_e(n,x_1, \ldots, x_k)\ne0.
	\end{equation}
	With this predicate, a `translation' of Example \ref{ex:example3} in the case where $k=2$ and 
	\begin{equation}\label{eq:pol}
	p_e(n,x_1,x_2) = x_1 + x_2 - n,
	\end{equation}
	 to Lean code (that compiles) can be done in the following way, with similar examples possible for a different choice of $e$ and thus $p_e$. In particular, suppose an AI suggested the following `translation' $Y'$ of Example~\ref{ex:example3} into Lean:
	
\begin{lstlistinglean}
import Mathlib

def p_e (n x1 x2 : Int) : Int := x1 + x2 - n

def B_e (n : Nat) : Prop :=
forall (x1 x2 : Nat), p_e (n : Int) (x1 : Int) (x2 : Int) != 0

noncomputable def n_e : Nat := sInf {n : Nat | B_e n}

noncomputable def r_e : Rat := 1 / (n_e + 1)

theorem example_3_1 : (r_e + 1) ^ 2 = r_e ^ 2 + 2 * r_e + 1 := by ring
\end{lstlistinglean}
	
	\vspace{2mm}
	
	Here \texttt{sInf} returns the least element of a nonempty set
	of natural numbers, but returns $0$ when the set is empty; see \texttt{Nat.sInf\_def} and \texttt{Nat.sInf\_empty} in \cite{MathlibNatInf}.
	Thus, if no natural number satisfies $B_e$, the code silently
	assigns $n_e=0$ and $r_e=1$. 
	However, for the polynomial in \eqref{eq:pol}, every $n\in\mathbb{N}$ admits the natural-number root $(x_1,x_2)=(n,0)$, since $p_e(n,n,0)=n+0-n=0$. Hence, $B_e(n)$ is false for every $n$, and 
	\[
	\{n\in\mathbb{N}:B_e(n)\}=\varnothing.
	\] In particular, $r_e$ is not well-defined. 
	The `proof' succeeds without resolving the ambiguity and the existence presupposition because it mistranslates the statement by using \texttt{sInf};

	{\bf Warning:} With the above `translation' $Y'$ (in the grey box) the algorithm in Example \ref{ex:example4} will halt and produce a mistranslation. In this case the semantics of the definition of $n_e$ have been completely changed. Indeed, if the above `translation' into Lean had actually been semantically faithful, it would have implied that there is an $n \in \mathbb{N}$ such that $p_e(n,x_1,x_2) = x_1 + x_2 - n$ has no roots $(x_1,x_2) \in \mathbb{N}^2$ -- this is false! 

\subsection{The root of the difficulty -- Creating semantic preserving autoformalisers is a major open problem}
All the problems discussed in the different examples above can be summarised in one rule of thumb, which illustrates the root of the problem:
	\begin{quote}
		{\bf  
			\emph{Lean acceptance (compilation) of translation does not imply semantic preservation.}
		}
	\end{quote}
To illustrate the issue we consider the following example:

\begin{example}[Lean compilation does not imply semantic preservation]\label{ex:23}
Consider the following two sentences: 
\begin{itemize}
\item[(1)] \emph{There are infinitely many odd numbers among the natural numbers.}
\item[(2)] \emph{There are infinitely many prime numbers among the natural numbers.}
\end{itemize}
Changing the word `odd' to `prime' changes the semantics of the first sentence completely, as it becomes the second sentence. Suppose an AI should translate sentence (1) into Lean. Suppose also that it did so semantically faithfully and Lean compiles. What would have happened if the AI instead had made a mistake and changed `odd' to `prime', and then translated sentence (2) semantically faithfully to Lean -- however, the intention was to translate sentence (1)? Lean would still compile, yet the `translation' is actually a mistranslation. 
\end{example}	

Example \ref{ex:23} reiterates the message in \S \ref{sec:why}: There are many candidate `translations' that would make Lean compile -- some of them would be semantically faithful translations -- but many would be mistranslations. Thus, it will be up to the AI to choose a semantically faithful translation, Lean does not provide a security net if the AI provides a mistranslation. This means that the AI is subject to the limitations arising from the foundations of mathematics and computations as highlighted in \S \ref{sec:Halting} and Theorem \ref{thm:presupposition}. Indeed, the AI autoformaliser cannot resolve all ambiguities and must therefore admit that there are many cases for which it will not know whether a translation is possible or not -- even if a semantic translation is possible.  

How to design trustworthy AI autoformalisers -- meaning that they will only provide semantically faithful translations -- is beyond the scope of this paper. Such developments will be announced in forthcoming articles. The key observation, however, is that even the most advanced models of OpenAI are far from achieving this goal. In particular, creating trustworthy AI autoformalisers that are capable of preserving semantics is a major open problem.


\section{Summary and perspectives}
\subsection{Attempting semantics-preserving autoformalisation in practice -- The Meta attempt}\label{sec:meta}
Here is a pertinent example of the second type of autoformalisation -- faithful semantic translation of a mathematical text -- going awry in practice.  During the spring of 2026, Meta declared a new milestone in autoformalisation: they claimed to have translated 26 mathematics textbooks into Lean, forming a library with 45,000 verified Lean declarations and around 500,000 lines of Lean code~\cite{RammalEtAl2026}. These claims were, however, swiftly refuted by the Lean community in the discussion quoted below~\cite{LeanCommunityAtlas2026}.
\begin{itemize}
	\item[(i)] Jack McCarthy: `\textit{Pg. 17 of the paper claims that 56\% of the target statements in this book [i.e. one of the 26 books] were formalized, but I could not find a single one which did not contain a fatal error like above. The reality is that 0\% of the statements here are accurately formalized}.'
	\item[(ii)] Brian Nugent: `\textit{I took a look at the algebraic geometry folder and it appears that the things it did prove (about 60\%
		of the book according to your website) were exactly the things that are in Mathlib and it failed on everything else. [...] claiming ``my AI autoformalized this AG textbook'' in the state that’s its in is simply lying}.'
	\item[(iii)] Christian Merten: `\textit{I would like to push back a bit on the point that this project has produced something useful on this front}.'
\end{itemize}
Fortunately, in this case, Lean experts were quickly able to detect the mistakes made by the AI in the autoformalisation. This may not be the case in the future:  proofs produced by autoformalisation are becoming longer and more complex. For example, Anthropic's recent autoformalisation of Fermat's Last Theorem took 13 million lines of Lean code~\cite{Buzzard2026}. The increases in both the number and length of autoformalised proofs will make it extremely difficult to catch mistranslations.

The problem of mistranslations by autoformalisation is particularly relevant in the context of building a unified body of knowledge in Lean, where results are consistent with, and build upon, one another. A single falsely translated statement in Mathlib means that all further results building upon this statement could be rendered invalid. Ensuring autoformalisation is done correctly is therefore of utmost importance to ensuring Lean libraries such as Mathlib remain correct and usable. 

We may ask: \emph{why did this autoformalisation project by Meta turn out the way it did?} The answer is that autoformalising AI systems currently follow something akin to the algorithm in Example \ref{ex:example4} to produce translations. The project did go beyond this, checking for faithfulness and non-cheating in the translations;  this semantic verification was, however, mainly performed by other AIs~\cite[Section~3 and Appendix~D]{RammalEtAl2026}. The fact that this failed highlights the difficulty that AIs have both with producing semantically faithful translations into Lean and with assessing the accuracy of such translations.

\begin{remark}
The crucial issue is that -- as pointed out in Remark \ref{rem:remark_refrain} -- a semantic preserving AI formaliser must refrain from doing a translation in certain cases. The example in \S \ref{sec:SemanticPreservationHard} and Theorem \ref{thm:presupposition} show that no AI autoformaliser can determine when it must refrain from translating and when it should translate. Thus, the big question is how to design AI autoformalisers that preserve semantics. Current AI based approaches are typically based on the `guessing and checking'  algorithm in Example \ref{ex:example4}, which is doomed to mistranslate because no AI can determine when it should or should not provide a suggested translation $Y'$ (recall the notation from Example \ref{ex:example4}).  
\end{remark}


\subsection{Semantics-preserving autoformalisation and the future of AI in mathematics}
The Leiden Declaration from June 2026~\cite{Leiden2026} discusses how incorrect use of AI can be harmful to mathematics as a discipline. It singles out in particular the dangers posed by autoformalisation:
\begin{quote}
	`\textit{Current automated techniques can produce plausible but unreliable (or even incorrect) arguments which are difficult to distinguish from correct mathematical proofs. \textbf{This applies not only to informal arguments, but also to formalizations, where the difficulty lies in the translation between computer-encoded and human presentations of concepts}}.'
\end{quote}
The boldface text refers exactly to the issues with the two forms of autoformalisation as discussed in this note. 
Even more recently, in September 2026, a declaration titled `A Severe Misalignment of AI in Mathematics'~\cite{Misalignment2026} initiated by 25 Fields medalists brings up other issues related to AI in mathematics:

\begin{quote}
	`\textit{But solving problems is only a tool and proxy for achieving the primary goal of conceptual understanding and insight. Forgetting this in the world of AI may turn the tool against the primary goal. Indeed, the mass production at faster and faster pace of ``true/false'' statements could destroy fertile ground instead of breathing life into new ideas.}'
\end{quote}
The quote also relates fundamentally to autoformalisation:
\begin{itemize}
\item[(i)] 
Solving a problem only formally in Lean -- without a semantically faithful translation from a correct NL argument that humans can read -- would typically only be a proxy for the optimal final product which should provide understanding of the argument (both an NL argument and a Lean proof  with a semantically faithful translation between the two). Without the semantically faithful translation, the NL argument could even be wrong -- which is detrimental to human understanding -- but would also harm future instalments of AI which rely on massive collections of correct mathematical NL literature in the training process.     

\item[(ii)]  Pure `true/false' statements coming from only Lean proofs (lacking a semantically faithful translation from a correct NL argument) can certainly be useful. Indeed, we would like to know if the Riemann Hypothesis is true. However, a principal reason for the interest in this problem stems from the belief that the techniques and ideas used in a proof could be applied broadly to address other questions and to enhance our understanding across analysis, number theory and the many other fields that the Riemann Hypothesis touches on. A correct but incomprehensible Lean proof would not have the same value -- mathematicians will unlikely read millions of lines of Lean code to try to understand. It would be even worse if an incorrect NL proof of the Riemann Hypothesis is accepted based on a mistranslated Lean proof; we would then have a complete misunderstanding of the facets of this problem.
\end{itemize}

As argued above, AI autoformalisers that can guarantee semantically faithful translations do not currently exist. Thus, developing semantically faithful AI autoformalisation stands as a major open problem, and attempts at quick fixes will typically violate fundamental limitations that date back to G\"odel and Turing.

\appendix

\section{The difficulty of resolving ambiguities and Hilbert's 10th problem}\label{appendix1}

\subsection{Help from the SCI hierarchy and Hilbert's 10th problem}

Here we continue the presupposition resolution discussion from Example~\ref{ex:example3}, explaining why no AI can resolve it for all $e \in \mathbb{N}$. The basic framework we use is that of the Solvability Complexity Index (SCI) hierarchy initiated in \cite{Hansen_JAMS} and further developed in \cite{CRAS, SCI, colbrook2019foundations, GazdagBastounisHansen2026, ColbrookAntunHansen2022, comp}. The SCI hierarchy is closely connected to S. Smale's \cite{Smale2, Smale_Acta_Numerica} program on the foundations of computational mathematics and scientific computing. This program motivated the early investigations of polynomial root-finding by C. McMullen \cite{McMullen1, McMullen2, Smale_McMullen} and P. Doyle \& C. McMullen \cite{Doyle_McMullen}. Their work yielded pioneering classification results within the SCI hierarchy, predating its formal introduction. See also the classification results in the SCI hierarchy by S. Weinberger \cite{Weinberger}. Although some of the mathematics underlying the SCI hierarchy can be traced back to K. G\"odel \cite{godel1931formal} and A. Turing \cite{Turing_Machine}, the newly developed techniques apply to any model of computation.

\subsubsection{An informal introduction to the SCI hierarchy}

Here we informally introduce the general SCI hierarchy (see \cite{SCI, ColbrookAntunHansen2022} for the formal definitions). The SCI hierarchy is designed to accommodate all types of computational problems and categorises the difficulty of computational problems according to the number of limits required to solve them. A computational problem is given by: a set of inputs $\Omega$ (where each element of $\Omega$ specifies a problem instance), a metric space $(\mathcal{M}, d)$ of candidate solutions, and a solution map $\Xi : \Omega \rightrightarrows \mathcal{M}$ that takes each problem instance to its solution(s) (the double arrow indicates that the solution could be multi-valued). There is in general also an evaluation set $\Lambda$ specifying which information about the problem instances may actually be accessed by an algorithm. This will not be relevant \footnote{For the computational problems discussed in this appendix, each problem instance in $\Omega$ is specified entirely by a single natural number, so $\Lambda$ amounts to containing only an identity map.} for this appendix. A computational problem is often specified simply as $\{\Xi, \Omega\}$, when $\mathcal{M}$ and $\Lambda$ are understood. 

\emph{The SCI of a computational problem is defined as the minimal number of limits \footnote{This is formalised by the notion of towers of algorithms.} required to solve that problem ($+\infty$ if no finite number of limits suffices)}. The mainstay classes of the hierarchy are the $\Delta_{m}^{\alpha}$ classes, which are defined for each $m \geq 0$. Here $\alpha$ is related to the model of computation, as explained below. Informally, we have the following description. Given a collection $\mathcal{C}$ of computational problems:
\begin{itemize}
\item[(i)] $\Delta^{\alpha}_0$ is the set of problems that can be computed in finite time, so that they satisfy SCI $=0$;
\item[(ii)] $\Delta^{\alpha}_1$ is the set of problems that can be computed using one limit (they satisfy SCI $\leq1$) with control of the error, i.e., there is a sequence of algorithms $\{\Gamma_n\}$ with $d(\Gamma_n(A), \Xi(A)) \leq 2^{-n}, \, \forall A \in \Omega$;
\item[(iii)] $\Delta^{\alpha}_2$ is the set of problems that can be computed using one limit (again SCI $\leq 1$) but sans error control, i.e., there is a sequence of algorithms $\{\Gamma_n\}$ with $\lim_{n\rightarrow \infty}\Gamma_n(A) = \Xi(A), \, \forall A \in \Omega$;
\item[(iv)] $\Delta^{\alpha}_{m+1}$, for $m \geq 1$, is the set of problems that can be computed by using $m$ limits, (in other words, SCI $\leq m$), i.e., there is a family of algorithms $\{\Gamma_{n_m, \hdots, n_1}\}$ with 
$$
\lim_{n_m \rightarrow\infty}\hdots \lim_{n_1\rightarrow\infty}\Gamma_{n_m,\hdots, n_1}(A) = \Xi(A), \, \forall A \in \Omega.
$$
\end{itemize}
In general, this hierarchy can only be refined further, beyond these classes, if the metric space $\mathcal{M}$ admits additional structure, such as being totally ordered. On the other hand, the hierarchy typically does not collapse, so that we have:
\begin{equation}\label{SCI1}
 \Delta_0^{\alpha} \subsetneq \Delta_1^{\alpha} \subsetneq \Delta_2^{\alpha} \subsetneq \hdots \subsetneq \Delta^{\alpha}_{m} \subsetneq \hdots.
\end{equation}
Depending on the collection $\mathcal{C}$ of computational problems, however, the hierarchy \eqref{SCI1} may terminate for a finite $m$ or continue for arbitrary large $m$. 

\begin{definition}[The SCI Hierarchy (totally ordered set)]\label{def:tot_ord}
Given the setup above, suppose in addition that $\mathcal{M}$ is a totally ordered set. 
Define 
\begin{equation*}
\begin{split}
\Sigma^{\alpha}_0 &= \Pi^{\alpha}_0 = \Delta^{\alpha}_0,\\
\Sigma^{\alpha}_{1} &= \{\{\Xi,\Omega\} \in \Delta_{2}^{\alpha} \ \vert \  \exists \ \textrm{ a sequence of algorithms } \{\Gamma_{n}\} \text{ s.t. } \Gamma_{n}(A) \nearrow \Xi(A) \ \, \forall A \in \Omega\}, 
\\
\Pi^{\alpha}_{1} &= \{\{\Xi,\Omega\} \in \Delta_{2}^{\alpha} \ \vert \  \exists \ \textrm{ a sequence of algorithms } \{\Gamma_{n}\} \text{ s.t. } \Gamma_{n}(A) \searrow \Xi(A) \ \, \forall A \in \Omega\},
\end{split}
\end{equation*}
where $\nearrow$ and $\searrow$ denotes convergence from below and above respectively, as well as, for $m \in \mathbb{N}$, 
\begin{equation*}
\begin{split}
\Sigma^{\alpha}_{m+1} &= \{\{\Xi,\Omega\} \in \Delta_{m+2}^{\alpha} \ \vert \  \exists \ \textrm{ height } m +1 \textrm{ tower } \{\Gamma_{n_{m+1}, \hdots, n_1}\} \textrm{ for } \{ \Xi, \Omega \} \text{ s.t. }\Gamma_{n_{m+1}}(A) \nearrow \Xi(A) \ \, \forall A \in \Omega\}, \\
\Pi^{\alpha}_{m+1} &= \{\{\Xi,\Omega\} \in \Delta_{m+2}^{\alpha} \ \vert \  \exists \ \textrm{ height } m +1 \textrm{ tower } \{\Gamma_{n_{m+1}, \hdots, n_1}\} \textrm{ for } \{ \Xi, \Omega \} \text{ s.t. }\Gamma_{n_{m+1}}(A) \searrow \Xi(A) \ \, \forall A \in \Omega\}.
\end{split}
\end{equation*}
\end{definition}
See \cite{SCI} for the formal definition of the towers of algorithms used to define $\Sigma_{m+1}^{\alpha}$ and $\Pi_{m+1}^{\alpha}$ above. A specific case of the definitions of such towers, when the algorithms are Turing machines, is elaborated upon below.
The above definitions apply in particular to the totally ordered metric space $\mathcal{M} = \{0,1\}$; this yields the SCI hierarchy for arbitrary decision problems. 
Schematically, the general SCI hierarchy can be viewed in the following way:

\begin{equation}\label{SCI_hierarchy}
\begin{tikzpicture}[baseline=(current  bounding  box.center)]
  \matrix (m) [matrix of math nodes,row sep=1.2em,column sep=1.5em] {
  \Pi_0^{\alpha}   &                    & \Pi_1^{\alpha} &    &  \Pi_2^{\alpha}&  & {}\\
  \Delta_0^{\alpha}&  \Delta_1^{\alpha} & \Sigma_1^{\alpha}\cup\Pi_1^{\alpha} & \Delta_2^{\alpha}&      \Sigma_2^{\alpha}\cup\Pi_2^{\alpha} & \Delta_3^{\alpha}& \cdots\\
	\Sigma_0^{\alpha}&                    & \Sigma_1^{\alpha} & &  \Sigma_2^{\alpha}&  &{} \\
  };
 \path[-stealth, auto] (m-1-1) edge[draw=none]
                                    node [sloped, auto=false,
                                     allow upside down] {$=$} (m-2-1)
																		(m-3-1) edge[draw=none]
                                    node [sloped, auto=false,
                                     allow upside down] {$=$} (m-2-1)
																		
																		(m-2-2) edge[draw=none]
                                    node [sloped, auto=false,
                                     allow upside down] {$\subsetneq$} (m-2-3)
																		(m-2-3) edge[draw=none]
                                    node [sloped, auto=false,
                                     allow upside down] {$\subsetneq$} (m-2-4)
																		(m-2-4) edge[draw=none]
                                    node [sloped, auto=false,
                                     allow upside down] {$\subsetneq$} (m-2-5)
																		(m-2-5) edge[draw=none]
                                    node [sloped, auto=false,
                                     allow upside down] {$\subsetneq$} (m-2-6)
																		(m-2-6) edge[draw=none]
                                    node [sloped, auto=false,
                                     allow upside down] {$\subsetneq$} (m-2-7)

												(m-2-1) edge[draw=none]
                                    node [sloped, auto=false,
                                     allow upside down] {$\subsetneq$} (m-2-2)
											 (m-2-2) edge[draw=none]
                                    node [sloped, auto=false,
                                     allow upside down] {$\subsetneq$} (m-1-3)
											(m-2-2) edge[draw=none]
                                    node [sloped, auto=false,
                                     allow upside down] {$\subsetneq$} (m-3-3)
											 (m-1-3) edge[draw=none]
                                    node [sloped, auto=false,
                                     allow upside down] {$\subsetneq$} (m-2-4)
																		(m-3-3) edge[draw=none]
                                    node [sloped, auto=false,
                                     allow upside down] {$\subsetneq$} (m-2-4)
																		(m-2-4) edge[draw=none]
                                    node [sloped, auto=false,
                                     allow upside down] {$\subsetneq$} (m-1-5)
											(m-2-4) edge[draw=none]
                                    node [sloped, auto=false,
                                     allow upside down] {$\subsetneq$} (m-3-5)
																		(m-1-5) edge[draw=none]
                                    node [sloped, auto=false,
                                     allow upside down] {$\subsetneq$} (m-2-6)
																		(m-3-5) edge[draw=none]
                                    node [sloped, auto=false,
                                     allow upside down] {$\subsetneq$} (m-2-6)
											(m-2-6) edge[draw=none]
                                    node [sloped, auto=false,
                                     allow upside down] {$\subsetneq$} (m-1-7)
																		(m-2-6) edge[draw=none]
                                    node [sloped, auto=false,
                                     allow upside down] {$\subsetneq$} (m-3-7);
																		
\end{tikzpicture}
\end{equation}
Note that the $\Sigma_1^{\alpha}$ and $\Pi_1^{\alpha}$ classes are crucial as they guarantee the existence of algorithms that will never make mistakes. They thus become crucial in computer-assisted proofs, see \cite[Section~3]{SCI}.

We also use standard facts about the arithmetical hierarchy and oracle computability, for which we refer the reader to \cite[Chapter~4]{Soare2016}.
The $\alpha$ in the $\Delta_{m}^{\alpha}$ classes can refer to any model of computation (e.g. the classical Turing model \cite{Turing_Machine} or the Blum-Shub-Smale model \cite{BSS89}). Hereon, we will take $\alpha = A$ where $A$ refers to the arithmetic model of computation. This means the algorithms are all Turing machines i.e. the classical Turing model.

\begin{remark}[SCI hierarchy generalises the arithmetical hierarchy]
We note that the arithmetical hierarchy becomes a special case of the SCI hierarchy \cite[Proposition~A.8]{SCI}. More precisely, with $\alpha = A$ and $\mathcal{M} = \{0, 1\}$, if we restrict to computational problems asking about membership in a subset of $\mathbb{N}$, we have $\Sigma_{m}^{A} = \Sigma_{m}^{0}$, $\Pi_{m}^{A} = \Pi_{m}^{0}$ and $\Delta_{m}^{A} = \Delta_{m}^{0}$ for every $m \geq 0$ \cite[Proposition~A.8]{SCI}. 
Here, $\Sigma_{m}^{0}$,  $\Pi_{m}^{0}$ and $\Delta_{m}^{0}$ are the classical classification classes from the arithmetical hierarchy.   
We will make use of this fact in the following results.
\end{remark}

\begin{notation}[Reducibility]
For sets $A,B\subseteq\mathbb N$, we write $A \leq_m B$ if there is a total computable function $f:\mathbb N\to\mathbb N$ such that
\[
(\forall n \in \mathbb{N}) [ n\in A\iff f(n)\in B].
\]
In this case we say $A$ is many-one reducible to $B$, witnessed by $f$.
We write $A \leq_T B$ if there is an oracle Turing machine which, with $B$ on the oracle tape, computes the characteristic function of $A$. In this case we say $A$ is Turing reducible to $B$.
The Turing degree of a set $A \subseteq \mathbb{N}$ is the set of all $C \subseteq \mathbb{N}$ that simultaneously satisfy $A \leq_T C$ and $C \leq_T A$.
We say $A$ is $\Sigma_{n}^{0}$-complete if $A \in \Sigma_{n}^{0}$ and also $C \leq_m A$ for every $C \in \Sigma_{n}^{0}$. Similarly, $A$ is $\Pi_{n}^{0}$-complete if $A \in \Pi_{n}^{0}$ and $C \leq_m A$ for every $C \in \Pi_{n}^{0}$.
We fix an integer-coefficient polynomial pairing function $\langle \textrm{ } , \textrm{ } \rangle : \mathbb{N}^2 \to \mathbb{N}$ given by
\begin{equation*}
\langle x, y \rangle = (x + y)(x + y + 1) + 2y, \textrm{ for  } (x, y) \in \mathbb{N}^2,
\end{equation*}
which is a computable injection, by \cite[Section~1, p.~1001]{Nathanson2016}. By iteration, for each $n \geq 2$ we also obtain and hereon fix an $n$-ary pairing function $\langle \textrm{ }, ..., \textrm{ } \rangle : \mathbb{N}^n \to \mathbb{N}$, which is also an integer-coefficient polynomial.
\end{notation}

\subsection{Resolving ambiguities is hard}
Recall the condition defining $n_e$ in Example~\ref{ex:example3}.
For an enumeration $(p_e)_{e\in\mathbb N}$ of integer-coefficient
polynomials in the variables $n,x_1,\ldots,x_k$, write
\begin{equation}\label{eq:presupposition-base-predicate}
    B_e(n)\iff
    (\forall(x_1,\ldots,x_k)\in\mathbb N^k)\textrm{ }
    p_e(n,x_1,\ldots,x_k)\ne0.
\end{equation}
Then $n_e$ is defined to be the least $n\in\mathbb N$ for which $B_e(n)$ holds, provided such an $n$ exists, and is undefined otherwise. Completeness throughout this subsection refers to completeness under many-one reductions.

\begin{theorem}\label{thm:presupposition}
Fix $k\geq9$. For any computable enumeration $(p_e)_{e\in\mathbb N}$ of all
polynomials with integer coefficients in $k+1$ variables, let $B_e$
be as in~\eqref{eq:presupposition-base-predicate}. Then the set
\[
    d:=\{e\in\mathbb N:(\exists n\in\mathbb N)\textrm{ }B_e(n)\}
\]
is $\Sigma^0_2$-complete. In particular, no AI can always determine
whether the number $n_e$ specified in Example~\ref{ex:example3} is well-defined, even with
an oracle for the Halting problem.
\end{theorem}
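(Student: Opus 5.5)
Membership and hardness are handled separately; the hardness direction uses the MRDP theorem in its strong form, with a universal Diophantine representation in $9$ unknowns.

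\emph{Membership in $\Sigma^0_2$.} The predicate $p_e(n,x_1,\ldots,x_k)\neq 0$ is computable in $(e,n,x_1,\ldots,x_k)$, since the enumeration $(p_e)$ is computable and evaluating an integer polynomial is computable. Hence $B_e(n)$ is $\Pi^0_1$ in $(e,n)$. Using the fixed polynomial pairing function to code the tuple $(x_1,\ldots,x_k)$ as a single natural number, the condition $e\in d$ has the form $\exists n\,\forall z\, R(e,n,z)$ with $R$ computable. This is a $\Sigma^0_2$ definition, so $d\in\Sigma^0_2$.

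\emph{Hardness.} Choose a standard $\Sigma^0_2$-complete set, for instance $\mathrm{Fin}=\{i:W_i \text{ finite}\}$ or, more conveniently, $\mathrm{Cof}^c$. Any set $S\in\Sigma^0_2$ has the form $S=\{i:\exists n\,\forall s\, \neg H(i,n,s)\}$, where $\{(i,n): \exists s\, H(i,n,s)\}$ is a c.e. set $W\subseteq\mathbb N^2$. By Matiyasevich's theorem with the Jones--Matiyasevich bound of $9$ unknowns, there is one integer polynomial $Q(a,y_1,\ldots,y_9)$ such that $a\in U$ (a fixed universal c.e. set) iff $\exists y\in\mathbb N^9\, Q(a,y)=0$. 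Apply this with $a=\langle c_W,i,n\rangle$, where $c_W$ is an index for $W$ viewed as a c.e. set of codes. Then for fixed $i$ the polynomial
\[
q_i(n,x_1,\ldots,x_9)=Q(\langle c_W,i,n\rangle,x_1,\ldots,x_9)
\]
has integer coefficients, since the pairing function is a polynomial, and it depends computably on $i$. It satisfies: $q_i(n,\cdot)$ has no natural root iff $(i,n)\notin W$. If $k>9$, add dummy variables; for instance take $q_i+0\cdot x_{10}$, or just treat $q_i$ as independent of the extra variables, which does not change solvability.

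Because $(p_e)$ enumerates \emph{all} polynomials in $k+1$ variables, we can computably find an index $f(i)$ with $p_{f(i)}=q_i$. To do this, search the enumeration for an index whose polynomial is syntactically equal to $q_i$; this presumes the enumeration lists polynomials as coefficient data with decidable equality, which I would state explicitly. The search terminates, so $f$ is total computable. Then
\[
i\in S \iff \exists n\,(i,n)\notin W \iff \exists n\, B_{f(i)}(n) \iff f(i)\in d,
\]
so $S\leq_m d$, and $d$ is $\Sigma^0_2$-complete.

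\emph{Consequence.} Suppose some machine with an oracle for the Halting set $\emptyset'$ decided $d$. Then $d\in\Delta^0_2$, so every $\Sigma^0_2$ set would be $\Delta^0_2$, and in particular $\Pi^0_2$. This contradicts the strictness of the arithmetical hierarchy ($\Sigma^0_2\neq\Pi^0_2$). Via \cite[Proposition~A.8]{SCI}, this is the statement that the decision problem is not in $\Delta^A_2$, i.e.\ it cannot be solved with the Halting oracle.

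\emph{Main obstacle.} The delicate step is obtaining the Diophantine representation uniformly in the parameter $n$ with at most $9$ unknowns while keeping $n$ as a genuine polynomial variable. I would handle it by treating $\langle c_W,i,n\rangle$ as a polynomial substitution into the universal $9$-unknown polynomial, as above, and by citing \cite{Matiyasevich1993} for the universal representation over $\mathbb N$.
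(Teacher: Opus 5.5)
Your argument is correct and is essentially the paper's. You show membership by counting quantifiers over a decidable relation, and hardness by substituting the polynomial pairing function into a $9$-unknown Diophantine representation, padding with dummy variables and searching the enumeration for the resulting polynomial; the oracle claim then follows from Post's theorem plus strictness of the hierarchy. The paper reduces from the specific complete set $\mathrm{NTOT}=\{e:(\exists n)\,P_e(n)\uparrow\}$ via the halting relation $K_1$, whereas you reduce an arbitrary $\Sigma^0_2$ set directly, as the paper itself does in Theorem~\ref{thm:presupposition-arbitrary-levels}; your detour through a universal c.e.\ set is also unnecessary, since the $9$-unknown representation can be applied to $W$ directly.

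One slip in an aside you never use: $\mathrm{Cof}^c$ is $\Pi^0_3$-complete, not in $\Sigma^0_2$. The set you presumably meant is the complement of $\mathrm{Tot}$, which is exactly the paper's $\mathrm{NTOT}$.
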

\begin{proof}
We use the usual $\mu$ notation for minimum search \cite{Soare2016}. By the assumption that the enumeration $e \mapsto p_e$ is computable and the assumption that $p_e$ has integer coefficients, there exists an algorithm that decides if $p_e(n,x_1, \ldots x_k) = 0$ given inputs $n,x_1,\dotsc,x_k$. The condition defining membership in $d$ is thus given by the quantifiers $(\exists n \in \mathbb{N})(\forall x_1 \in \mathbb{N}) ... (\forall x_k \in \mathbb{N})$ over a decidable relation, so we have $d\in\Sigma^0_2$.

To obtain $\Sigma^{0}_{2}$-completeness, fix a standard effective enumeration $(P_e)_{e\in\mathbb N}$ of all Turing machines. For $e, n \in \mathbb{N}$, write $P_e(n) \downarrow$ when $P_e(n)$ halts and $P_e(n) \uparrow$ when $P_e(n)$ does not halt. Recall that a Diophantine set~\cite{Matiyasevich1993} is a set $A \subseteq \mathbb{N}$ that can be represented using a finite number of unknowns, in the sense that there exists an integer $s \geq 1$ and an integer-coefficient polynomial $p(n, x_1, ..., x_s)$ in $s+1$ variables with:
\begin{equation*}
n \in A \iff (\exists (x_1, \ldots, x_s) \in \mathbb{N}^s) \textrm{ }p(n, x_1, \ldots, x_s) = 0.
\end{equation*}
The Davis--Putnam--Robinson--Matiyasevich (DPRM) theorem~\cite{Matiyasevich1993} states that the Diophantine sets are precisely the computably enumerable sets. The nine-unknown refinement~\cite{Jones1982} of the DPRM theorem permits every computably enumerable relation over $\mathbb N$ to be represented using nine unknowns.
Applying this to the computably enumerable set $K_1 = \{ \langle e, n \rangle \in \mathbb{N} : P_e(n) \downarrow \}$, we find there exists a fixed integer-coefficient polynomial $q_0(\tilde{n}, x_1, ..., x_9)$, where $\tilde{n}$ is a variable, such that $\langle e, n \rangle \in K_1$ if and only if $q_0(\langle e, n \rangle, x_1, ..., x_9)$ admits a root $(x_1, ..., x_9) \in \mathbb{N}^{9}$. Defining the polynomial $q(e, n, x_1, ..., x_9) := q_0(\langle e, n \rangle, x_1, ..., x_9)$, we have for each $e, n \in \mathbb{N}$:
\[
    P_e(n)\downarrow
    \quad\Longleftrightarrow\quad
    (\exists (x_1, \ldots, x_9) \in\mathbb N^9)\textrm{ } q(e,n,x_1,\ldots, x_9)=0.
\]
For $k>9$, regard $q$ as independent of the additional variables
$x_{10},\ldots,x_k$. Define the binary predicate $W \subseteq \mathbb{N}^2$ by
\[
W(e, j) \iff p_j(n, x_1, ..., x_k) \equiv q(e, n, x_1, ..., x_k), 
\]
where $\equiv$ denotes that the two polynomials, whose variables are $n, x_1, ..., x_k$, have coinciding coefficients; given $e$ and $j$ this is decidable. Since $(p_e)_{e \in \mathbb{N}}$ enumerates all integer-coefficient polynomials in $k+1$ variables, the function $t(e) := \mu j[W(e, j)]$ is well-defined and computable. By the definitions of $d$ and $q$, we have
\[
    t(e)\in d
    \quad\Longleftrightarrow\quad
    (\exists n\in\mathbb N)\textrm{ }P_e(n)\uparrow
\]
In other words, $t$ is a computable many-one reduction from
\[
    \mathrm{NTOT}:=\{e\in\mathbb N:(\exists n\in\mathbb N)\textrm{ }P_e(n)\uparrow\}
\]
to $d$. Since $\mathrm{NTOT}$ is $\Sigma^0_2$-complete, so is $d$. By Post's theorem \cite[Chapter~4]{Soare2016}, we have $d \leq_{T} \emptyset' \iff d \in \Delta_{2}^{0}$. Since $d \in \Sigma_{2}^{0}$, we have $d \leq_T \emptyset' \iff d \in \Pi_{2}^{0}$. But whenever a set belongs to $\Pi_{2}^{0}$ so too does any set that (many-one) reduces to it; thus if $d \in \Pi_{2}^{0}$ then by $\Sigma_{2}^{0}$-completeness we would have $\Sigma_{2}^{0} \subseteq \Pi_{2}^{0}$, contradicting strictness of the arithmetical hierarchy \cite[Corollary~4.2.3, p.~85]{Soare2016}. Consequently, we cannot have $d \leq_{T} \emptyset'$. Since $d$ is exactly the set of
$e$ for which $n_e$ in Example~\ref{ex:example3} is well-defined, the final claim follows.
\end{proof}

The preceding result can be extended to arbitrarily high levels in the SCI hierarchy whilst simultaneously retaining polynomial predicates and the least-number construction of Example~\ref{ex:example3}. We first generalise the predicate $B_e(n)$.
For $l\geq2$, consider an enumeration $(p_e)_{e\in\mathbb N}$ of
integer-coefficient polynomials in the $l-1+k$ variables
\[
    n,z_2,\ldots,z_{l-1},x_1,\ldots,x_k.
\]
Let $Q_j$ denote $\exists$ for odd $j$ and $\forall$ for even $j$. We omit $z_2,\ldots,z_{l-1}$ when $l=2$. Define $B^{(l)}_e(n)$ by the following two cases.
\begin{subequations}\label{eq:presupposition-level-predicate}
For even $l$, again omitting $z_2, \ldots z_{l-1}$ when $l = 2$, we have
\begin{equation}
    \begin{aligned}
        B^{(l)}_e(n)\iff
        &(Q_2z_2 \in \mathbb{N})\;\cdots\;(Q_{l-1}z_{l-1} \in \mathbb{N})\;
        (\forall(x_1,\ldots,x_k)\in\mathbb N^k)\\
        &p_e(n,z_2,\ldots,z_{l-1},x_1,\ldots,x_k)\ne0,
    \end{aligned}
\end{equation}
For odd $l \geq 3$,
\begin{equation}
    \begin{aligned}
        B^{(l)}_e(n)\iff
        &(Q_2z_2 \in \mathbb{N})\;\cdots\;(Q_{l-1}z_{l-1}\in\mathbb{N})\;
        (\exists(x_1,\ldots,x_k)\in\mathbb N^k)\\
        &p_e(n,z_2,\ldots,z_{l-1},x_1,\ldots,x_k)=0.
    \end{aligned}
\end{equation}
\end{subequations}
Note that (using the same $k$ and polynomial enumeration) $B^{(2)}_e(n)$ is $B_e(n)$. As in \cite{Soare2016}, we write $\emptyset^{(0)} = \emptyset$ and $\emptyset^{(j+1)}$ for the Halting problem relative to $\emptyset^{(j)}$, for $j \in \mathbb{N}$.

\begin{theorem}[Presupposition resolution at arbitrary arithmetical levels]
\label{thm:presupposition-arbitrary-levels}
Fix $l\geq2$ and $k\geq9$. Fix a computable enumeration
$(p_e)_{e\in\mathbb N}$ of all integer-coefficient polynomials in $l-1+k$ variables, and let $B^{(l)}_e$ be as
in~\eqref{eq:presupposition-level-predicate}. Then
\[
d_l:=\{e\in\mathbb N:(\exists n\in\mathbb N)\textrm{ }B^{(l)}_e(n)\}
\]
is $\Sigma_{l}^{0}$-complete. In particular, no algorithm can always determine whether the least $n$ for which $B^{(l)}_e(n)$ holds is well-defined, even with an oracle for $\emptyset^{(l-1)}$.
\end{theorem}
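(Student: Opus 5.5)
The proof generalises the proof of Theorem~\ref{thm:presupposition}, which is the case $l=2$. There are two parts: an upper bound and a hardness reduction.

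\textbf{Upper bound.} Membership $e\in d_l$ is given by
\[
(\exists n)(Q_2 z_2)\cdots(Q_{l-1}z_{l-1})(Q\,\vec x)\,R,
\]
where $R$ is decidable. For even $l$ the last block is $\forall\vec x$ and $R$ is $p_e\ne0$; for odd $l$ the last block is $\exists\vec x$ and $R$ is $p_e=0$. The quantifiers alternate starting with $\exists$, since $Q_1=\exists$ is played by $n$. For even $l$ the last block $\forall\vec x$ has the same type as $Q_{l-1}$ (odd index), and for odd $l$ the block $\exists\vec x$ has the same type as $Q_{l-1}$ (even index). In both cases it merges with the preceding quantifier via the polynomial pairing function. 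There are therefore exactly $l$ alternating blocks, so $d_l\in\Sigma^0_l$.

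\textbf{Hardness.} I would use a standard $\Sigma^0_l$-complete set, for example $\emptyset^{(l)}$ or any $\Sigma^0_l$-complete set $S$. Every $\Sigma^0_l$ set has a normal form
\[
e\in S\iff(\exists n)(\forall z_2)\cdots(Q_{l-1}z_{l-1})\,\bigl[\langle e,n,z_2,\ldots,z_{l-1}\rangle\in V\bigr],
\]
where $V$ is $\Pi^0_1$ when $l$ is even and $\Sigma^0_1$ when $l$ is odd. This comes from the Kleene/Post normal form, collapsing the innermost decidable matrix together with the last quantifier.

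\begin{itemize}
\item For odd $l$, $V$ is c.e. By the nine-unknown DPRM theorem~\cite{Jones1982} there is a polynomial $q_0$ with $\langle e,n,\vec z\rangle\in V\iff\exists\vec x\in\mathbb N^9\,q_0(\langle e,n,\vec z\rangle,\vec x)=0$.
\item For even $l$, the complement of $V$ is c.e., so $V$ holds iff $\forall\vec x\,q_0(\cdots)\ne0$.
\end{itemize}

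In both cases I substitute the polynomial pairing to get $q(e,n,\vec z,\vec x)$, pad with the extra variables when $k>9$, and define $t(e)=\mu j\,[p_j\equiv q(e,\cdot)]$ exactly as before. Then $t(e)\in d_l\iff e\in S$, so $t$ is a many-one reduction and $d_l$ is $\Sigma^0_l$-complete.

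\textbf{Final claim.} Suppose $d_l\le_T\emptyset^{(l-1)}$. By Post's theorem, $d_l\in\Delta^0_l\subseteq\Pi^0_l$. Completeness then gives $\Sigma^0_l\subseteq\Pi^0_l$, which contradicts the strictness of the hierarchy. Since $d_l$ is exactly the set of $e$ for which the least $n$ is well-defined, the claim follows.

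\textbf{Main obstacle.} The delicate step is parity bookkeeping. One must make sure the normal form for $\Sigma^0_l$ ends with the right innermost quantifier type, so that the $\Sigma^0_1$ or $\Pi^0_1$ matrix matches the form of $B^{(l)}_e$ for each parity. One must also ensure that the pairing-based substitution keeps $q$ an integer-coefficient polynomial in exactly the variables $n,z_2,\ldots,z_{l-1},x_1,\ldots,x_k$, so that it appears in the enumeration.
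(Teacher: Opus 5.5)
Your proposal follows the paper's proof essentially step for step. Both use the $\Sigma^0_l$ normal form with the last quantifier and the decidable matrix absorbed into a $\Sigma^0_1$ or $\Pi^0_1$ relation; the paper's $C$ is your $V$ or its complement. Both then apply the nine-unknown DPRM theorem through the polynomial pairing, identify the first quantified variable with $n$, search for the index $t(e)=\mu j\,[p_j\equiv q(e,\cdot)]$, and use Post's theorem for the oracle claim. The parity bookkeeping in your hardness direction is correct.

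The one flaw is in your membership paragraph, where you make exactly the parity slip you warn about. With $Q_j=\exists$ for odd $j$:
\begin{itemize}
\item for even $l$ the index $l-1$ is odd, so $Q_{l-1}=\exists$, while the $\vec x$ block is $\forall$;
\item for odd $l$ we have $Q_{l-1}=\forall$, while the $\vec x$ block is $\exists$.
\end{itemize}
So the $\vec x$ block always has the type of $Q_l$, the opposite of $Q_{l-1}$, and it does not merge with $z_{l-1}$. If it did merge, you would count only $l-1$ blocks and obtain $d_l\in\Sigma^0_{l-1}$, contradicting the $\Sigma^0_l$-completeness you go on to prove.

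The correct count is that $\exists n$, $Q_2z_2,\ldots,Q_{l-1}z_{l-1}$ and the $\vec x$ block form $l$ genuinely alternating blocks. The pairing function is needed, if at all, only to contract the $k$ like quantifiers inside the $\vec x$ block into one. This matches the paper's observation that $B^{(l)}_e(n)$ is $\Pi^0_{l-1}$. Your conclusion $d_l\in\Sigma^0_l$ is right, but the sentence justifying it should be replaced.
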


\begin{proof}
Fix $l\geq2$. By its definition, $B^{(l)}_e(n)$ is a $\Pi^0_{l-1}$ predicate. Indeed, it begins with $l-2$ alternating quantifiers - the first being a universal quantifier -- and the subsequent $k$ quantifiers are all either existential or all universal; the affected predicate therein, namely $p_e(n, z_2, ..., z_{l-1} , x_1, ..., x_k) = 0$, is decidable because the polynomial enumeration is computable and the computation is over the integers. That is, $d_l \in\Sigma_{l}^{0}$.
For $\Sigma_{l}^{0}$-hardness, let $A\subseteq\mathbb N$ be any $\Sigma_{l}^{0}$ set. By definition, there exists a computable relation $R \subseteq \mathbb{N}^{l+1}$ such that
\begin{equation}\label{eq:presupposition-level-normal-form}
 a \in A \iff   (Q_1z_1 \in \mathbb{N})\;\cdots\;(Q_lz_l \in \mathbb{N})\textrm{ }R(a,z_1,\ldots,z_l).
\end{equation}
Define a computably enumerable relation on the remaining parameters by
\[
    C(a,z_1,\ldots,z_{l-1})\quad\Longleftrightarrow\quad
    \begin{cases}
        (\exists z_l \in \mathbb{N})\textrm{ }R(a,z_1,\ldots,z_l), & \text{ for } l\text{ odd};\\
        (\exists z_l \in \mathbb{N})\textrm{ }\neg R(a,z_1,\ldots,z_l),  &\textrm{ for } l\text{ even}.
    \end{cases}
\]
By the nine-unknown refinement of the DPRM theorem \cite{Matiyasevich1993,Jones1982} applied to the computably enumerable set $\{ \langle a, z_1, ..., z_{l-1} \rangle \in \mathbb{N} : C(a, z_1, ..., z_{l-1})\}$, there exists a fixed integer-coefficient polynomial $q_0(\tilde{n}, x_1, ..., x_9)$, where $\tilde{n}$ is a variable, such that
\begin{equation*}
C(a, z_1, ..., z_{l-1}) \iff (\exists (x_1, ..., x_9) \in \mathbb{N}^9) \textrm{ }q_0(\langle a, z_1, ..., z_{l-1} \rangle, x_1, ...., x_9) = 0.
\end{equation*}
Defining the polynomial $q(a, z_1, ..., z_{l-1}, x_1, ..., x_9) := q_0(\langle a, z_1, ..., z_{l-1} \rangle, x_1, ...., x_9)$, we have
\[
    C(a,z_1,\ldots,z_{l-1})\iff
    (\exists(x_1,\ldots,x_9)\in\mathbb N^9)\textrm{ }
    q(a,z_1,\ldots,z_{l-1},x_1,\ldots,x_9)=0.
\]
If $l$ is odd, this representation permits \eqref{eq:presupposition-level-normal-form} to be expressed as
\begin{equation*}
\begin{aligned}
a \in A \iff & (\exists z_1 \in \mathbb{N})\;\cdots\;(\forall z_{l-1} \in \mathbb{N}) (\exists (x_1, ..., x_9) \in \mathbb{N}^9) \\ & q(a, z_1, ..., z_{l-1}, x_1, ..., x_9) = 0.
\end{aligned}
 \end{equation*}
If $l$ is even, negation of the defining condition for $C$ gives
\[
    (\forall z_l \in \mathbb{N})\textrm{ }R(a,z_1,\ldots,z_l)
    \iff
    (\forall(x_1,\ldots,x_9)\in\mathbb N^9)\textrm{ }q(a,z_1,\ldots,z_{l-1},x_1,\ldots,x_9)\ne0,
\]
so that
\eqref{eq:presupposition-level-normal-form} can be expressed as
\begin{equation*}
\begin{aligned}
a \in A \iff  & (\exists z_1 \in \mathbb{N})(\forall z_2 \in \mathbb{N})\;\cdots\;(\exists z_{l-1} \in \mathbb{N}) (\forall (x_1, ..., x_9) \in \mathbb{N}^9) \\ & q(a, z_1, ..., z_{l-1}, x_1, ..., x_9) \neq 0.
\end{aligned}
\end{equation*}
Consequently, regardless of the parity of $l$, we have replaced the final quantification $(Q_l z_l \in \mathbb{N})$ in \eqref{eq:presupposition-level-normal-form} sans appending an additional quantifier alternation. When $k>9$, we regard $q$ as independent of the additional variables $x_{10},\ldots,x_k$.
Define the binary predicate $W \subseteq \mathbb{N}^2$ by
\begin{equation*}
W(a, j) \iff p_j(n, z_2, ..., z_{l-1}, x_1, ..., x_k) \equiv q(a, n, z_2 ..., z_{l-1}, x_{1}, ..., x_k),   
\end{equation*}
where $\equiv$ denotes that the two polynomials, whose variables are $n, z_2, ..., z_{l-1}, x_1, ..., x_k$, have coinciding coefficients; given $a$ and $j$ this is decidable.
Since $(p_e)_{e \in \mathbb{N}}$ enumerates all integer-coefficient polynomials in $l-1+k$ variables, the function $t(a) := \mu j [W(a, j)]$ is well-defined and computable.
The preceding equivalences imply that
\[
    a\in A\iff t(a)\in d_l.
\]
That is, $A\leq_m d_l$, establishing $\Sigma^0_l$-completeness of $d_l$.
By definition of $d_l$, the least $n \in \mathbb{N}$ satisfying $B^{(l)}_e(n)$ exists exactly when $e\in d_l$.
By Post's theorem \cite{Soare2016}, we have $d_l \leq_T \emptyset^{(l-1)} \iff d_l \in \Delta_{l}^{0}$. Since $d_l \in \Sigma_{l}^{0}$, this becomes $d_l \leq_T \emptyset^{(l-1)} \iff d_l \in \Pi_{l}^{0}$. But whenever a set belongs to $\Pi_{l}^{0}$ so too does any set that (many-one) reduces to it; thus if $d _l\in \Pi_{l}^{0}$, then by $\Sigma_{l}^{0}$-completeness we would have $\Sigma_{l}^{0} \subseteq \Pi_{l}^{0}$, contrary to the strictness of the arithmetical hierarchy. Consequently, we cannot have $d_l \leq_T \emptyset^{(l-1)}$.
\end{proof}

When we use the same $k$ and the same polynomial enumeration,  the case $l=2$ gives $B_{e}^{(2)}(n) = B_e(n)$ for every $n \in \mathbb{N}$, so that $d_2 = d$, which recovers Theorem~\ref{thm:presupposition}.
Moreover, for every $l\geq2$, the complement of $d_l$ is
$\Pi_{l}^{0}$-complete, and $d_l$ has the Turing degree of $\emptyset^{(l)}$ (since $\emptyset^{(l)}$ is $\Sigma_{l}^{0}$-complete by Post's Theorem \cite{Soare2016}).
For each fixed $l\geq2$ and $e \in \mathbb{N}$, let $n^{(l)}_e$ be the least $n$ for which $B^{(l)}_e(n)$ holds, and define $r^{(l)}_e =1/(n^{(l)}_e+1)$.
Both $n^{(l)}_e$ and $r^{(l)}_e$ are left undefined if there is no such $n$.
Replacing $n_e$ and $r_e$ in Example~\ref{ex:example3} by $n^{(l)}_e$ and $r^{(l)}_e$, respectively, gives the same
least-number construction. The displayed identity
$(r^{(l)}_e+1)^2=(r^{(l)}_e)^2+2r^{(l)}_e+1$ remains elementary to establish, while as $e$ varies, deciding whether the stated definition of $r^{(l)}_e$ refers to a well-defined number is a $\Sigma^0_l$-complete problem. Accordingly, we have established the existence of presupposition-resolution problems of
unbounded finite arithmetical complexity.

For the SCI classification of the presupposition resolution problem we have thus far discussed, we use \emph{arithmetic (computable) towers}; see \cite[Definitions~5.2--5.6]{colbrook2019foundations} for the formal definition.
The input space $\Omega$ is $\mathbb N$ and the output space is $\mathcal{M} = \{0,1\}$ with the discrete metric. A tower of height $h \geq 1$ for a function $\Xi : \Omega \to \{0, 1\}$ is given by a bottom-level computable function $\Gamma : \Omega \times \mathbb{N}^h \to \{0, 1\}$ that satisfies
\[
    \Xi(e)=\lim_{s_1\to\infty}\cdots\lim_{s_h\to\infty}
    \Gamma(e,s_1,\ldots,s_h),
\]
for each $e \in \Omega$. In particular, each successive limit exists (for every fixed choice of the outer indices). An arithmetical tower of height zero for $\Xi$ is simply a computable function $\Gamma : \Omega \to \{0,1\}$ with $\Gamma = \Xi$. Given any function $\Xi : \Omega \to \{0, 1\}$, we define $\SCI_{\mathrm A}(\Xi)$ to be the least $h \geq 0$ for which there is an arithmetical tower of height $h$ for $\Xi$; we define $\SCI_{\mathrm A}(\Xi) = +\infty$ when no such tower exists. We also define $\Delta^{\mathrm A}_{h+1}$ to denote the class of problems $\Xi$ with $\SCI_{\mathrm A}(\Xi) \leq h$, for each $h \geq 1$.

\begin{corollary}[Unbounded SCI for presupposition resolution]
\label{cor:presupposition-sci}
For each $l\geq2$, define $\Xi_{d_l} : \Omega \to \{0, 1\}$ by $\Xi_{d_l}(e) = 1$ if and only if $e \in d_l$. With $k\geq9$ and the enumeration in Theorem~\ref{thm:presupposition-arbitrary-levels}, we have
\[
    \SCI_{\mathrm A}(\Xi_{d_l})=l
    \qquad\text{for every }l\geq2.
\]
Equivalently, the decision problem for the set $d_l$ lies in
$\Delta^{\mathrm A}_{l+1}\setminus\Delta^{\mathrm A}_l$.
\end{corollary}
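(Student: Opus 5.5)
The plan is to match the arithmetical tower definition of $\SCI_{\mathrm A}$ with the limit-lemma characterisation of the jump hierarchy. We need two facts for $\Xi : \mathbb N \to \{0,1\}$ and $h\geq 1$:
\begin{itemize}
\item[(a)] $\Xi$ has an arithmetical tower of height $h$ if and only if $\Xi$ is computable relative to $\emptyset^{(h)}$;
\item[(b)] by Post's theorem, $\Xi \leq_T \emptyset^{(h)}$ if and only if the set $\{e : \Xi(e)=1\}$ lies in $\Delta^0_{h+1}$.
\end{itemize}
Once (a) is established, $\SCI_{\mathrm A}(\Xi_{d_l})$ is the least $h$ with $d_l \leq_T \emptyset^{(h)}$. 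This is the same as the identification $\Delta^{\mathrm A}_{h+1}=\Delta^0_{h+1}$ quoted in the Remark on the arithmetical hierarchy, citing \cite[Proposition~A.8]{SCI}. Where possible I will simply invoke that proposition, but I will sketch (a) for self-containedness.

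For (a) I argue by induction on $h$ using Shoenfield's limit lemma. The lemma says that $f \leq_T X'$ if and only if $f(e)=\lim_s g(e,s)$ for some $g\leq_T X$.

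\emph{Forward direction.} Given a tower $\Gamma(e,s_1,\ldots,s_h)$, define $\Gamma_{h-1}(e,s_1,\ldots,s_{h-1}) := \lim_{s_h}\Gamma(e,s_1,\ldots,s_h)$. This is computable from $\emptyset'$, since $\Gamma$ is computable. Iterating, each innermost limit costs one jump, and relativising the limit lemma at each stage gives $\Xi \leq_T \emptyset^{(h)}$.

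\emph{Converse.} If $\Xi \leq_T \emptyset^{(h)}$, apply the limit lemma to get $\Xi(e)=\lim_{s_1} g_1(e,s_1)$ with $g_1 \leq_T \emptyset^{(h-1)}$. Then treat $g_1$ as a $\{0,1\}$-valued function of the input $(e,s_1)$ and apply the induction hypothesis to it, peeling off one limit per jump. After $h$ steps this yields a computable bottom level $\Gamma$. All intermediate functions are $\{0,1\}$-valued, and the limits exist for every fixed choice of outer indices, exactly as the definition requires.

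For the upper bound, Theorem~\ref{thm:presupposition-arbitrary-levels} gives $d_l \in \Sigma^0_l \subseteq \Delta^0_{l+1}$. So $d_l \leq_T \emptyset^{(l)}$, and by (a) there is a tower of height $l$, giving $\SCI_{\mathrm A}(\Xi_{d_l})\leq l$.

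For the lower bound, suppose a tower of height $h\leq l-1$ existed. Then $d_l\leq_T\emptyset^{(h)}\leq_T\emptyset^{(l-1)}$. Height $0$ is included here, since a height-$0$ tower means $d_l$ is computable. This contradicts the final assertion of Theorem~\ref{thm:presupposition-arbitrary-levels}. Hence $\SCI_{\mathrm A}(\Xi_{d_l})=l$, i.e.\ $\Xi_{d_l}\in\Delta^{\mathrm A}_{l+1}\setminus\Delta^{\mathrm A}_l$.

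The main obstacle is the converse direction of (a): ensuring that the nested limits produced by iterating the limit lemma are genuinely pointwise limits for \emph{every} fixed choice of outer indices, rather than only along the outer limit. I would handle this by applying the relativised limit lemma to the total function $g_1(e,s_1)$ itself at each stage, so that each intermediate level is a total $\{0,1\}$-valued function whose limit exists everywhere. Alternatively, I would defer entirely to \cite[Proposition~A.8]{SCI} and \cite[Definitions~5.2--5.6]{colbrook2019foundations}.
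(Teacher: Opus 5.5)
Your proposal is correct and takes essentially the same route as the paper. The upper bound comes from $d_l\in\Sigma^0_l\subseteq\Delta^0_{l+1}$. The lower bound comes from $d_l\notin\Delta^0_l$, which is equivalent to the final clause of Theorem~\ref{thm:presupposition-arbitrary-levels} that $d_l$ is not Turing reducible to $\emptyset^{(l-1)}$. Both are transported to arithmetical towers via the identification $\Delta^{\mathrm A}_{h+1}=\Delta^0_{h+1}$. The only difference is that you prove that identification yourself with the relativised Shoenfield limit lemma, correctly ensuring each intermediate limit exists for every fixed choice of outer indices, whereas the paper simply cites \cite[Proposition~A.8]{SCI}.
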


\begin{proof}
Fix $l \geq 2$. Since $d_l \in \Sigma_{l}^{0}$, we have $d_l \in \Sigma_{l}^{\mathrm A}$ (as the SCI hierarchy generalises the arithmetical hierarchy, as discussed above). This means $\SCI_{\mathrm A}(\Xi_{d_l}) \leq l$.

On the other hand, if $\SCI_{\mathrm A}(\Xi_{d_l}) < l$, then there is an arithmetical tower of height $0 \leq h < l$ for $\Xi_{d_l}$. This would give that $d_l \in \Delta_{h+1}^{0}$ (again using that the SCI hierarchy generalises the arithmetical hierarchy) and thus also $d_l \in \Delta_{l}^{0}$ since $h < l$. But $d_l$ is $\Sigma_{l}^{0}$-complete so cannot lie in $\Delta_{l}^{0}$ ( else $\Sigma_{l}^{0} \subseteq \Pi_{l}^{0}$, contrary to strictness of the arithmetical hierarchy \cite[Corollary~4.2.3, p.~85]{Soare2016}).
This means $\SCI_{\mathrm{A}}(\Xi_{d_l}) \geq l$. We conclude that $\SCI_{\mathrm{A}}(\Xi_{d_l}) = l$, as required.
\end{proof}

\subsubsection{The problem of resolving ambiguities has SCI $= \infty$}\label{sec:ambSCI}
Recall Example \ref{ex:example3} and $B_e(n)$ from \eqref{eq:pred}. Recall that the ambiguity in Example \ref{ex:example3} -- whether $n_e$ is well defined or not -- was caused by the following question:
\begin{quote}
	`\textit{Is $\{n\in\mathbb{N}:B_e(n)\}$ empty or not}?
\end{quote}
Note that we could have made Example \ref{ex:example3}, and the definition of $n_e$, more general yielding an ambiguity caused by the question:
\begin{quote}
	`\textit{Is $\{n\in\mathbb{N}:B^{(l)}_e(n)\}$ empty or not}? 
\end{quote}
  Here, $B^{(l)}_e(n)$ is defined for integers $l \geq 2$ in \eqref{eq:presupposition-level-predicate}. Note that a trustworthy AI autoformaliser that will always provide an output: either a semantically faithful translation or `this statement cannot be translated' must resolve the above question for each $e \in \mathbb{N}$ and $l \geq 2$. However, by Corollary \ref{cor:presupposition-sci} -- for each fixed $l$ -- this problem has SCI $= l$. Hence, the problem of answering the above question for all $e \in \mathbb{N}$ and $l \geq 2$ has SCI $= \infty$. 
  We can now make sense of our informal statement:
 \begin{quote}
	\emph{
		Providing semantically faithful AI autoformalisation is harder than any computational problem. 
	}
\end{quote}
 By `semantically faithful AI autoformalisation' we mean a trustworthy AI autoformaliser that will always provide an output -- as above. Also, by `any computational problem' we mean any computational problem with finite SCI.  So our statement above -- which was made informal for the purpose of readability -- means: Every trustworthy semantically faithful AI formaliser (as above) must solve problems that are harder than any problem with finite SCI.
 
 \begin{remark}
	The requirement that $k \geq 9$ represents the current state of literature on the DPRM theorem; this is an active area of research with a recent refinement~\cite{sun2026} lowering the bound from $k\geq11$ to $k\geq10$ in the case where $x \in \mathbb{Z}^k$ (in our theorem and proof we work with $x \in \mathbb{N}^k$, where the best known bound for $\Sigma_2^0$ completeness of $d$ is $k\geq 9$).
\end{remark}

\bibliographystyle{abbrv}
\bibliography{references_3}


\end{document}